\documentclass[12pt]{amsart}

\usepackage[utf8]{inputenc}
\usepackage[english]{babel}
\usepackage{amsmath,amsthm,amssymb,amsfonts,mathrsfs,stmaryrd}
\usepackage{enumerate}
\usepackage{hyperref}
\usepackage[noabbrev,capitalise,nameinlink,nosort]{cleveref}
\usepackage[dvipsnames]{xcolor}
\usepackage{graphicx}
\usepackage{setspace}
\usepackage{indentfirst}
\usepackage{geometry}
\newcommand{\N}{\mathbb N}
\newcommand{\Z}{\mathbb Z}

\newcommand{\End}{\operatorname{End}}

\newcommand{\FDRep}{\operatorname{FDRep}}

\newtheorem{theorem}{Theorem}[section]
\newtheorem{lemma}[theorem]{Lemma}
\newtheorem{proposition}[theorem]{Proposition}
\newtheorem{corollary}[theorem]{Corollary}
\newtheorem{definition}[theorem]{Definition}
\newtheorem{construction}[theorem]{Construction}
\newtheorem{remark}[theorem]{Remark}
\newtheorem{example}[theorem]{Example}

\title[RFD ultragraph algebras] {Residual finite-dimensionality of ultragraph algebras via branching systems} 

\author[D. Gon\c{c}alves]{Daniel Gon\c{c}alves} \address[D. Gon\c{c}alves]{ Departamento de Matem\'atica, Universidade Federal de Santa Catarina, Florian\'opolis, SC, Brazil } \email{daemig@gmail.com} 

\author[D. Royer]{Danilo Royer} \address[D. Royer]{ Departamento de Matem\'atica, Universidade Federal de Santa Catarina, Florian\'opolis, SC, Brazil } \email{danilo.royer@ufsc.br} 

\thanks{The first author was partially supported by Conselho Nacional de
Desenvolvimento Cient\'ifico e Tecnol\'ogico (CNPq), Brazil, and Funda\c{c}\~ao
de Amparo \`a Pesquisa e Inova\c{c}\~ao do Estado de Santa Catarina (FAPESC).
The second author was partially supported by Funda\c{c}\~ao
de Amparo \`a Pesquisa e Inova\c{c}\~ao do Estado de Santa Catarina (FAPESC)}

\subjclass[2020]{Primary 16S88, 46L05; Secondary 46L55, 16G99, 05C20}

\keywords{ultragraphs, ultragraph Leavitt path algebras, ultragraph \(C^*\)-algebras, branching systems, residual finite-dimensionality, finite-dimensional representations}

\begin{document}

\begin{abstract}
We study residual finite-dimensionality for ultragraph algebras, both in the
algebraic and in the \(C^*\)-algebraic settings.  We introduce
graph-theoretic RFD conditions for ultragraphs, extending the conditions that
characterize RFD graph \(C^*\)-algebras.  Using the boundary ultrapath
branching system, we construct finite-dimensional branching-system
representations associated to terminal boundary sets and no-exit cycles.  These
representations are used to prove that, whenever an ultragraph satisfies the
graph-theoretic RFD conditions, its ultragraph Leavitt path algebra
\(L_K(\mathcal G)\) is RFD, for every field \(K\), and its ultragraph
\(C^*\)-algebra \(C^*(\mathcal G)\) is RFD.

For ultragraphs satisfying Condition~\emph{(RFUM2)}, we prove converses in both
settings. The analytic converse uses the groupoid model and the density of
periodic points, while the algebraic converse is proved directly by
finite-dimensional linear algebra.  Thus, for RFUM2 ultragraphs, RFD of
\(L_K(\mathcal G)\), RFD of \(C^*(\mathcal G)\), and the graph-theoretic RFD
conditions are equivalent.  This gives, in particular, a common combinatorial
description linking the algebraic and analytic theories, recovers the graph
\(C^*\)-algebra characterization, and yields an algebraic characterization for
Leavitt path algebras of graphs.  We also construct an RFD ultragraph algebra
which is genuinely outside the graph-algebra class in both settings.
\end{abstract}

\maketitle

\section{Introduction}


Residual finite-dimensionality is a finiteness property defined in terms of
finite-dimensional representations.  A \(C^*\)-algebra \(A\) is
\emph{residually finite-dimensional} (RFD) if its finite-dimensional
\(*\)-representations separate points.  This property has been studied in many
contexts in \(C^*\)-theory, including free products, approximation properties,
questions related to Kirchberg's QWEP and Connes embedding problems, the UCT
problem, and the problem of realizing \(C^*\)-algebras as subalgebras of
AF-algebras; see, for example,
\cite{ExelLoring1992,Archbold1995,Ozawa2004QWEP,Dadarlat2000NonnuclearAF}.
For graph \(C^*\)-algebras, Bellier and Shulman first characterized the
unital RFD case in \cite{BellierShulman2026}, and Bellier subsequently gave a
complete graph-theoretic characterization for countable graph \(C^*\)-algebras
in \cite{Bellier2026}.

There is a corresponding purely algebraic notion.  If \(K\) is a field, a
\(K\)-algebra \(A\) is \emph{residually finite-dimensional} if its
finite-dimensional \(K\)-representations separate points; equivalently, for
every \(0\neq a\in A\), there exist a finite-dimensional \(K\)-vector space
\(V\) and a \(K\)-algebra homomorphism
$   \pi:A\longrightarrow \End_K(V)$
such that \(\pi(a)\neq0\).   This property and related finite-dimensional
representation conditions have appeared in several parts of algebra, including
the theory of finite dual coalgebras, Hopfian and Bassian algebras, polynomial
almost identities, and FCR-type conditions; see
\cite{Reyes2024FiniteDual,Rowen2019HopfianBassian,LarsenShalev2022RFD,
KraftSmallWallach2001,Farkas2005}.

The present paper studies residual finite-dimensionality for ultragraph
algebras, both in the algebraic and in the \(C^*\)-algebraic settings.
Ultragraphs were introduced by Tomforde in \cite{Tomforde2003Ultragraph} as a
framework encompassing graph \(C^*\)-algebras and Exel--Laca algebras.  They
also provide examples which are genuinely outside the Exel--Laca and graph-algebra class.
Their purely algebraic counterparts, ultragraph Leavitt path algebras, extend
Leavitt path algebras of graphs and the algebraic analogues of Exel--Laca
algebras; see
\cite{ImanfarPourabbasLarki2020UltragraphLPA,GoncalvesRoyer2021UltragraphReduction,
DuyenGoncalvesNam}.  Ultragraphs have also been used in symbolic dynamics,
especially in the study of infinite-alphabet shift spaces and their associated
algebras; see, for example,
\cite{GoncalvesRoyer2019InfiniteAlphabetEdgeShift,
TascaGoncalves2022KMSUltragraphSinks,UltragraphGroupoids}.

A central tool in this paper is the theory of branching systems.  Branching
systems give concrete models for the generators of graph and ultragraph
algebras, producing representations on spaces of functions or on Hilbert
spaces.  They connect graph and ultragraph algebras with topics such as
wavelets, symbolic dynamics, Perron--Frobenius operators, interval maps,
permutative representations, and uniqueness theorems; see
\cite{BratteliJorgensen1997,GoncalvesRoyer2011,GoncalvesRoyer2012,
GoncalvesLiRoyer2016,GoncalvesLiRoyer2018HigherRankGraph,
GoncalvesLiRoyer2016Ultragraph,EidtGoncalvesRoyerRelativeUltragraph}.
Here we use branching systems in a different way: we construct finite
branching-system representations designed to separate prescribed elements.
This gives a branching-system approach to residual finite-dimensionality,
even in the graph case.

Although graph \(C^*\)-algebras and Leavitt path algebras often share the
same graph-theoretic characterizations, results do not automatically transfer
from one setting to the other.  The analytic and algebraic proofs usually use
different tools, and deciding whether a property has the same combinatorial
description in both settings is a separate problem.  This distinction is also
present in the current work.  In the positive direction, finite branching
systems provide a common source of finite-dimensional representations, but the
algebraic theorem uses the reduction theorem for ultragraph Leavitt path
algebras, whereas the \(C^*\)-algebraic theorem uses a Cuntz--Krieger
uniqueness theorem.  In the converse direction, the analytic argument uses the
groupoid model and the relation between RFD groupoid \(C^*\)-algebras and
periodic points \cite{ShulmanSkalski}, while the algebraic argument is based
on finite-dimensional linear algebra applied directly to representations of
ultragraph Leavitt path algebras.

We now describe the main results.  We introduce graph-theoretic RFD conditions
for ultragraphs.  These conditions require that the ultragraph have no infinite
receivers, no cycles with exits, and no infinite backward chains, and that every
vertex reach either a terminal boundary set or a cycle.  When the ultragraph is
an ordinary graph, these conditions reduce to Bellier's conditions for graph
\(C^*\)-algebras.  Our first main theorem says that, for any ultragraph
satisfying these graph-theoretic RFD conditions, the ultragraph Leavitt path
algebra \(L_K(\mathcal G)\) is RFD.  Over \(\mathbb C\), the finite
branching-system representations may be chosen to be \(*\)-representations, so
\(L_{\mathbb C}(\mathcal G)\) is \(*\)-RFD.  Using the same finite branching
systems as finite-dimensional Hilbert-space representations, together with the
general Cuntz--Krieger uniqueness theorem for ultragraph \(C^*\)-algebras, we
also prove that \(C^*(\mathcal G)\) is RFD.

We then prove converse results under Condition~\emph{(RFUM2)}.  This condition,
introduced in \cite{TascaGoncalves2022KMSUltragraphSinks}, gives a tractable
boundary ultrapath space with a locally compact Hausdorff topology and a
Deaconu--Renault groupoid model.  For RFUM2 ultragraphs, we prove that if
\(C^*(\mathcal G)\) is RFD, then \(\mathcal G\) satisfies the graph-theoretic
RFD conditions.  We also prove an algebraic converse: if \(L_K(\mathcal G)\) is
RFD, then \(\mathcal G\) satisfies the same graph-theoretic RFD conditions. Consequently,
for RFUM2 ultragraphs we obtain the equivalence

\begin{enumerate}[(i)] \item \(C^*(\mathcal G)\) is RFD; \item \(L_K(\mathcal G)\) is RFD; \item \(\mathcal G\) satisfies the graph-theoretic RFD conditions. \end{enumerate}

This gives a common combinatorial characterization linking the algebraic and
analytic theories.  Since graphs are RFUM2 ultragraphs, the graph case recovers
Bellier's \(C^*\)-algebraic characterization and gives, to the best of our
knowledge, a new algebraic characterization of RFD Leavitt path algebras of
graphs.  Moreover, every Exel--Laca algebra is isomorphic to the
\(C^*\)-algebra of an RFUM2 ultragraph.  Hence the RFUM2 equivalence theorem
also gives RFD criteria for Exel--Laca algebras in terms of the
graph-theoretic RFD conditions of their associated ultragraphs.

Finally, we construct an RFUM2 ultragraph satisfying the
graph-theoretic RFD conditions such that both \(L_K(\mathcal G)\) and
\(C^*(\mathcal G)\) are RFD, but \(L_K(\mathcal G)\) is not isomorphic to the
Leavitt path algebra of any countable graph and \(C^*(\mathcal G)\) is not
isomorphic to the graph \(C^*\)-algebra of any countable graph.  The obstruction
is given by infinitely many nonzero pairwise orthogonal central idempotents in
the algebraic setting and central projections in the \(C^*\)-setting.

The paper is organized as follows.  Section~2 recalls the necessary background
on ultragraphs, ultragraph Leavitt path algebras, ultragraph \(C^*\)-algebras,
branching systems, and residual finite-dimensionality.  We then construct the
boundary ultrapath branching system and study its tail orbits in
Section~\ref{orbits in ultragraphs}, before formulating the graph-theoretic RFD
conditions for ultragraphs in Section~\ref{sec:RFD-conditions-ultragraphs}.
We use terminal
tails and no-exit cycles to build finite branching-system representations in Section~\ref{sec:finite-dimensional-branching-representations}, and
 apply these representations to prove
the algebraic and \(C^*\)-algebraic RFD theorems in Section~\ref{sec:RFD-ultragraph-algebras}.  We prove the analytic and algebraic
converses for RFUM2 ultragraphs, together with the resulting equivalence
theorem, in Section~\ref{conversesec}. In the final section  we present an
RFD ultragraph algebra beyond the graph-algebra class.

\section{Preliminaries}

We recall the basic notation for ultragraphs, ultragraph algebras, and
branching systems.  Our conventions follow the standard references on
ultragraph \(C^*\)-algebras and ultragraph Leavitt path algebras; see
\cite{Tomforde2003Ultragraph,Tomforde2003SimplicityUltragraph,
GoncalvesRoyer2021UltragraphReduction, ImanfarPourabbasLarki2020UltragraphLPA, UltragraphGroupoids}.  For branching systems, we use the
algebraic version from \cite{GoncalvesRoyer2021UltragraphReduction} and the
analytic version from \cite{GoncalvesLiRoyer2016Ultragraph, EidtGoncalvesRoyerRelativeUltragraph}; these extend
the corresponding constructions for graph algebras studied in
\cite{GoncalvesRoyer2011,GoncalvesRoyer2012,GoncalvesLiRoyer2016}.

\subsection{Ultragraphs} An \emph{ultragraph} is a quadruple
\(\mathcal G=(G^0,\mathcal G^1,r,s)\), where \(G^0\) is a set of vertices,
\(\mathcal G^1\) is a set of edges, \(s:\mathcal G^1\to G^0\) is the source
map, and \(r:\mathcal G^1\to \mathcal P(G^0)\setminus\{\emptyset\}\) is the
range map.  Thus, unlike in an ordinary directed graph, the range of an edge
is a nonempty set of vertices.  We denote by \(\mathcal G^0\) the smallest
collection of subsets of \(G^0\) containing all singletons \(\{v\}\), all
ranges \(r(e)\), and closed under finite unions and finite intersections.
The elements of \(\mathcal G^0\) are called \emph{generalized vertices}.  We
write \(A\subseteq B\) for inclusion as subsets of \(G^0\), even when
\(A,B\in\mathcal G^0\).

A finite path in \(\mathcal G\) is either a generalized vertex
\(A\in\mathcal G^0\), regarded as a path of length zero, or a word
\(\alpha=e_1\cdots e_n\), \(n\geq 1\), such that
\(s(e_{i+1})\in r(e_i)\) for \(1\leq i<n\).  The set of finite paths is
denoted by \(\mathcal G^*\).  If \(\alpha=e_1\cdots e_n\), then
\(|\alpha|=n\), \(s(\alpha)=s(e_1)\), and \(r(\alpha)=r(e_n)\).  For
\(A\in\mathcal G^0\), we put \(|A|=0\) and \(s(A)=r(A)=A\).

A vertex \(v\in G^0\) is a \emph{sink} if \(s^{-1}(v)=\emptyset\), a
\emph{regular vertex} if \(0<|s^{-1}(v)|<\infty\), and an \emph{infinite
emitter} if \(|s^{-1}(v)|=\infty\).  We denote the set of sinks by
\(G^0_s\).  We will also use the notation
\[
   \varepsilon(A)=\{e\in\mathcal G^1:s(e)\in A\},
   \qquad A\in\mathcal G^0.
\]
Following \cite{TascaGoncalves2022KMSUltragraphSinks}, a set
\(A\in\mathcal G^0\) is a \emph{minimal infinite emitter} if
\(|\varepsilon(A)|=\infty\) and no proper subset \(B\subsetneq A\), with
\(B\in\mathcal G^0\), is either an infinite emitter or an infinite set with
\(|\varepsilon(B)|<\infty\).  Similarly, \(A\) is a \emph{minimal sink} if
\(|A|=\infty\), \(|\varepsilon(A)|<\infty\), and no proper subset
\(B\subsetneq A\), with \(B\in\mathcal G^0\), is infinite.  We say that
\(\mathcal G\) satisfies Condition \emph{(RFUM2)} if every edge range
\(r(e)\) is a finite union of minimal infinite emitters, minimal sinks, and
singletons \(\{v\}\), where \(v\) is either a sink or a regular vertex.  This
is the standing hypothesis used in the construction of ultragraph shift
spaces with sinks in \cite{TascaGoncalves2022KMSUltragraphSinks}.

A closed path is a path \(c=e_1\cdots e_n\), \(n\geq 1\), such that
\(s(e_1)\in r(e_n)\).  A cycle is a closed path \(c=e_1\cdots e_n\) for
which the vertices \(s(e_1),\ldots,s(e_n)\) are distinct.  An exit for a
closed path \(c=e_1\cdots e_n\) is either an edge
\(f\in\mathcal G^1\) such that \(s(f)\in r(e_i)\) for some \(i\) and
\(f\neq e_{i+1}\), with indices read cyclically, or a sink
\(w\in r(e_i)\) for some \(i\).  We say that \(c\) has no exits if no such
exit exists.  In particular, if \(c=e_1\cdots e_n\) is a cycle without
exits, then each \(r(e_i)\) is the singleton \(\{s(e_{i+1})\}\), again with
indices read cyclically.

\subsection{Ultragraph Leavitt path algebras and ultragraph C*-algebras} Let \(R\) be a unital commutative ring.  The ultragraph Leavitt path algebra
\(L_R(\mathcal G)\) is the universal \(R\)-algebra generated by
\(\{s_e,s_e^*:e\in\mathcal G^1\}\) and \(\{p_A:A\in\mathcal G^0\}\),
subject to the following relations:
\begin{enumerate}[(L1)]
\item \(p_\emptyset=0\), \(p_Ap_B=p_{A\cap B}\), and
\(p_{A\cup B}=p_A+p_B-p_{A\cap B}\), for all \(A,B\in\mathcal G^0\);
\item \(p_{s(e)}s_e=s_ep_{r(e)}=s_e\) and
\(p_{r(e)}s_e^*=s_e^*p_{s(e)}=s_e^*\), for all \(e\in\mathcal G^1\);
\item \(s_e^*s_f=\delta_{e,f}p_{r(e)}\), for all
\(e,f\in\mathcal G^1\);
\item \(p_v=\sum_{s(e)=v}s_es_e^*\), whenever \(v\) is a regular vertex.
\end{enumerate}
Here and throughout, we write \(p_v\) for \(p_{\{v\}}\).  This convention is
also used for other indexed families, for instance \(D_v=D_{\{v\}}\).

The ultragraph \(C^*\)-algebra \(C^*(\mathcal G)\), introduced in
\cite{Tomforde2003Ultragraph}, is the universal \(C^*\)-algebra generated by
projections \(\{p_A:A\in\mathcal G^0\}\) and partial isometries
\(\{s_e:e\in\mathcal G^1\}\), with mutually orthogonal ranges, satisfying
the Boolean relations for the projections, \(s_e^*s_e=p_{r(e)}\),
\(s_es_e^*\leq p_{s(e)}\), and
\(p_v=\sum_{s(e)=v}s_es_e^*\) for every regular vertex \(v\).  Thus the
defining relations are the \(C^*\)-algebraic analogue of the relations above,
with \(s_e^*\) denoting the Hilbert-space adjoint.

\subsection{Branching systems and invariant subsets}
\begin{definition}[Algebraic ultragraph branching system]
Let \(X\) be a set.  An \emph{algebraic \(\mathcal G\)-branching system} on
\(X\) consists of subsets \(R_e\subseteq X\), for \(e\in\mathcal G^1\),
subsets \(D_A\subseteq X\), for \(A\in\mathcal G^0\), and bijections
\(f_e:D_{r(e)}\to R_e\), such that:
\begin{enumerate}[(i)]
\item the sets \(R_e\) are pairwise disjoint;
\item \(D_\emptyset=\emptyset\), \(D_A\cap D_B=D_{A\cap B}\), and
\(D_A\cup D_B=D_{A\cup B}\), for all \(A,B\in\mathcal G^0\);
\item \(R_e\subseteq D_{s(e)}\), for all \(e\in\mathcal G^1\);
\item \(D_v=\bigsqcup_{s(e)=v}R_e\), whenever \(v\) is a regular vertex.
\end{enumerate}
\end{definition}

A branching system gives a concrete representation of \(L_R(\mathcal G)\),
where \(R\) is an unital commutative ring. Let \(M_R(X)\) be the \(R\)-module of
\(R\)-valued functions on \(X\).  The associated representation
\(\pi_X:L_R(\mathcal G)\to \End_R(M_R(X))\) is given by
\[
\pi_X(p_A)\phi=\chi_{D_A}\phi,\qquad
\pi_X(s_e)\phi=\chi_{R_e}(\phi\circ f_e^{-1}),\qquad
\pi_X(s_e^*)\phi=\chi_{D_{r(e)}}(\phi\circ f_e).
\]

Ultragraph Leavitt algebras $L_R(\mathcal{G})$ and representations induced by branching systems are well known for unital commutative rings $R$. However, since our focus in this paper are RFD algebras, from now on, we replace the unital commutative ring $R$ by a field $K$. So, the module $M_K(X)$ of the previous paragraph is in fact a $K$-vector space.

If \(X\) is finite, the representation \(\pi_X:L_K(\mathcal G)\to \End_K(M_K(X))\) is finite-dimensional.  If
\(K=\mathbb C\) and \(X\) is finite, then \(M_\mathbb C(X)\cong \ell^2(X)\),
and \(\pi_X\) is a \(*\)-representation: the projections act diagonally and
the partial isometries act as partial permutation matrices.

We shall frequently restrict branching systems to invariant subsets.  Let
\(\mathcal B=\{D_A,R_e,f_e:A\in\mathcal G^0,\ e\in\mathcal G^1\}\) be an
algebraic \(\mathcal G\)-branching system on \(X\).  A subset \(Y\subseteq X\)
is called \(\mathcal G\)-invariant if, for every \(e\in\mathcal G^1\),
\[
   f_e(Y\cap D_{r(e)})\subseteq Y
   \quad\text{and}\quad
   f_e^{-1}(Y\cap R_e)\subseteq Y.
\]

\begin{proposition}\label{restricted branching system}
Let \(\mathcal B\) be an algebraic \(\mathcal G\)-branching system on \(X\),
and let \(Y\subseteq X\) be a \(\mathcal G\)-invariant subset.  For
\(A\in\mathcal G^0\) and \(e\in\mathcal G^1\), set
\(D_A^Y=D_A\cap Y\) and \(R_e^Y=R_e\cap Y\), and let
\(f_e^Y:D_{r(e)}^Y\to R_e^Y\) be the restriction of \(f_e\).  Then
\(\{D_A^Y,R_e^Y,f_e^Y:A\in\mathcal G^0,\ e\in\mathcal G^1\}\) is an
algebraic \(\mathcal G\)-branching system on \(Y\).
\end{proposition}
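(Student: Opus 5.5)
We check the four axioms (i)–(iv) of an algebraic \(\mathcal G\)-branching system for the restricted data, after first confirming that each \(f_e^Y\) is a well-defined bijection \(D_{r(e)}^Y\to R_e^Y\).

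\textbf{The maps \(f_e^Y\).} Fix \(e\in\mathcal G^1\).
\begin{itemize}
\item[] \emph{Well-defined.} By the first invariance condition, \(f_e(Y\cap D_{r(e)})\subseteq Y\). Since also \(f_e(D_{r(e)})=R_e\), we get \(f_e(D_{r(e)}^Y)\subseteq R_e\cap Y=R_e^Y\), so the restriction lands in \(R_e^Y\).
\item[] \emph{Injective.} This is inherited from \(f_e\).
\item[] \emph{Surjective.} Take \(y\in R_e^Y\) and set \(x=f_e^{-1}(y)\in D_{r(e)}\). The second invariance condition gives \(x\in Y\), so \(x\in D_{r(e)}^Y\) and \(f_e^Y(x)=y\).
\end{itemize}
This surjectivity step is the only point where the second half of the invariance hypothesis is used.

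\textbf{Axioms (i)–(iii).} These follow by intersecting the corresponding identities for \(\mathcal B\) with \(Y\).
\begin{itemize}
\item[] \emph{(i)} The sets \(R_e^Y\subseteq R_e\) are pairwise disjoint because the \(R_e\) are.
\item[] \emph{(ii)} We have \(D_\emptyset^Y=\emptyset\cap Y=\emptyset\). Moreover,
\[
D_A^Y\cap D_B^Y=(D_A\cap D_B)\cap Y=D_{A\cap B}^Y
\quad\text{and}\quad
D_A^Y\cup D_B^Y=(D_A\cup D_B)\cap Y=D_{A\cup B}^Y,
\]
by distributivity of intersection over union.
\item[] \emph{(iii)} From \(R_e\subseteq D_{s(e)}\) we get \(R_e^Y=R_e\cap Y\subseteq D_{s(e)}\cap Y=D_{s(e)}^Y\).
\end{itemize}

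\textbf{Axiom (iv).} Let \(v\) be a regular vertex. Then
\[
D_v^Y=\Bigl(\bigsqcup_{s(e)=v}R_e\Bigr)\cap Y=\bigsqcup_{s(e)=v}R_e^Y .
\]
The union on the right is still disjoint, since the restricted sets are subsets of the disjoint sets \(R_e\).

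There is no real obstacle here; all steps are routine set-theoretic verifications.
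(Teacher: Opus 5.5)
Your proposal is correct and follows essentially the same route as the paper: obtain (i)--(iv) by intersecting the corresponding relations for \(\mathcal B\) with \(Y\), then use the first invariance condition to see that \(f_e\) maps \(D_{r(e)}^Y\) into \(R_e^Y\) and the second to get surjectivity onto \(R_e^Y\). Your version simply spells out the well-definedness, injectivity and surjectivity steps a little more explicitly than the paper does.
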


\begin{proof}
The Boolean relations for the sets \(D_A^Y\), the pairwise disjointness of
the sets \(R_e^Y\), and the inclusions \(R_e^Y\subseteq D_{s(e)}^Y\) follow
immediately by intersecting the corresponding relations in \(X\) with \(Y\).
If \(v\) is regular, then
\(D_v=\bigsqcup_{s(e)=v}R_e\), and hence
\(D_v^Y=\bigsqcup_{s(e)=v}R_e^Y\).  It remains only to observe that
\(f_e^Y\) is a bijection from \(D_{r(e)}^Y\) onto \(R_e^Y\).  This follows
from the two invariance conditions: the first ensures that \(f_e\) maps
\(D_{r(e)}^Y\) into \(R_e^Y\), and the second ensures that every point of
\(R_e^Y\) has its \(f_e\)-preimage in \(D_{r(e)}^Y\).
\end{proof}

We also use analytic branching systems for ultragraph \(C^*\)-algebras, as
introduced in \cite{GoncalvesLiRoyer2016Ultragraph}.  Let \((X,\mu)\) be a
measure space.  A \(C^*\)-branching system for \(\mathcal G\) consists of
measurable sets \(D_A\) and \(R_e\), together with measurable maps
\(f_e:D_{r(e)}\to R_e\) and measurable inverses \(f_e^{-1}:R_e\to D_{r(e)}\),
such that the set-theoretic branching-system relations above hold modulo
null sets.  In addition, the pushforward measures \(\mu\circ f_e\) and
\(\mu\circ f_e^{-1}\) are required to be absolutely continuous with respect
to \(\mu\) on the corresponding domains; we denote the Radon--Nikodym
derivatives by \(\Phi_{f_e}\) and \(\Phi_{f_e^{-1}}\).

The induced representation of \(C^*(\mathcal G)\) on \(L^2(X,\mu)\) is given
by
\[
\pi(p_A)\phi=\chi_{D_A}\phi,\qquad
\pi(s_e)\phi=\chi_{R_e}\Phi_{f_e^{-1}}^{1/2}(\phi\circ f_e^{-1}),
\qquad
\pi(s_e)^*\phi=\chi_{D_{r(e)}}\Phi_{f_e}^{1/2}(\phi\circ f_e),
\]
where the functions are extended by zero outside their natural domains.  For
a finite set \(X\) with counting measure, all Radon--Nikodym derivatives are
equal to \(1\).  Thus the analytic representation is exactly the Hilbert
space version of the algebraic branching-system representation over
\(\mathbb C\).

\subsection{Residual finite-dimensionality}

We shall use residual finite-dimensionality in three closely related senses:
for algebras over a field, for complex \(*\)-algebras, and for
\(C^*\)-algebras.  The \(C^*\)-algebraic notion is classical; see, for
example, \cite{ExelLoring1992,Archbold1995}.  Its purely algebraic analogue
is also standard: an algebra is residually finite-dimensional when its
finite-dimensional representations separate points, or equivalently when it
embeds into a product of finite-dimensional algebras.  This terminology and
its equivalent formulations appear, for instance, in
\cite{Reyes2024FiniteDual,Rowen2019HopfianBassian}.

\begin{definition}[Algebraic residual finite-dimensionality]
Let \(K\) be a field, and let \(A\) be an associative \(K\)-algebra.  We say
that \(A\) is \emph{residually finite-dimensional}, or \emph{RFD}, if for
every nonzero element \(a\in A\) there exist a finite-dimensional
\(K\)-vector space \(V\) and a \(K\)-algebra homomorphism
\(\pi:A\to \End_K(V)\) such that \(\pi(a)\neq 0\).
\end{definition}

Equivalently, \(A\) is RFD if its finite-dimensional representations separate
points, that is,
\[
   \bigcap_{\pi\in \FDRep_K(A)}\ker(\pi)=\{0\},
\]
where \(\FDRep_K(A)\) denotes the class of all \(K\)-algebra homomorphisms
\(\pi:A\to \End_K(V_\pi)\) with \(\dim_K(V_\pi)<\infty\).  Since choosing a
basis identifies \(\End_K(V_\pi)\) with a full matrix algebra over \(K\), one
may equivalently use homomorphisms \(A\to M_n(K)\).  In quotient-theoretic
terms, this says that the zero ideal is the intersection of two-sided ideals
\(I\) for which \(A/I\) is finite-dimensional over \(K\).

\begin{definition}[Complex \(*\)-algebraic residual finite-dimensionality]
Let \(A\) be a complex \(*\)-algebra.  We say that \(A\) is
\emph{\(*\)-residually finite-dimensional}, or \emph{\(*\)-RFD}, if its
finite-dimensional \(*\)-representations separate points.  Equivalently, for
every \(0\neq a\in A\), there exist a finite-dimensional Hilbert space \(H\)
and a \(*\)-homomorphism \(\pi:A\to B(H)\) such that \(\pi(a)\neq 0\).
\end{definition}

For a complex \(*\)-algebra, \(*\)-RFD is generally stronger than residual
finite-dimensionality as a complex algebra, because the separating
finite-dimensional representations are required to preserve the involution.
This distinction will be relevant when we pass from algebraic branching
systems over \(\mathbb C\) to finite-dimensional \(*\)-representations of
ultragraph Leavitt path algebras.

\begin{definition}[\(C^*\)-algebraic residual finite-dimensionality]
A \(C^*\)-algebra \(A\) is \emph{residually finite-dimensional}, or
\emph{RFD}, if it has a separating family of finite-dimensional
\(*\)-representations.  Equivalently, there are positive integers \(n_i\) and
a faithful \(*\)-homomorphism
\[
   A\longrightarrow \prod_i M_{n_i}(\mathbb C).
\]
\end{definition}

\section{The boundary ultrapath branching system and tail orbits}
\label{orbits in ultragraphs}

In this section, we construct a branching system on the boundary ultrapath
space of an ultragraph.  Appropriate invariant restrictions of this branching
system will be the basic source of finite-dimensional representations in the
sequel.

Let \(A_\infty\) denote the collection of minimal infinite emitters, let
\(A_s\) denote the collection of minimal sinks, and let
\[
   \mathcal G_s^0:=\{\{v\}:v\in G^0 \text{ is a sink}\}.
\]

\begin{definition}
An element \(A\in\mathcal G^0\) is called a \emph{terminal boundary set} if
\[
   A\in \mathcal A:=A_\infty\cup A_s\cup \mathcal G_s^0.
\]
\end{definition}

\begin{remark}
A minimal infinite emitter need not be infinite as a set of vertices.  Indeed,
if \(v\) is a vertex infinite emitter, then \(\{v\}\) is a minimal infinite
emitter.  More generally, if \(A\in A_\infty\), then either \(A=\{v\}\) for a
vertex infinite emitter \(v\), or \(A\) is infinite.  On the other hand, every
minimal sink is infinite by definition.  The families \(A_\infty\), \(A_s\),
and \(\mathcal G_s^0\) are pairwise disjoint as collections of generalized
vertices: elements of \(A_\infty\) emit infinitely many edges, elements of
\(A_s\) emit only finitely many edges and are infinite sets, and elements of
\(\mathcal G_s^0\) are singleton sinks.

We shall also use the following consequence of minimality.  If
\(C\in\mathcal A\) and \(B_1,B_2\in\mathcal G^0\), then
\(C\subseteq B_1\cup B_2\) implies \(C\subseteq B_1\) or \(C\subseteq B_2\).
For singleton sinks this is immediate.  For minimal sinks it follows because
otherwise \(C\cap B_1\) and \(C\cap B_2\) would be proper generalized subsets
whose union is the infinite set \(C\), so one of them would be infinite,
contradicting minimality.  For minimal infinite emitters, if neither
\(C\subseteq B_1\) nor \(C\subseteq B_2\), then \(C\cap B_1\) and
\(C\cap B_2\) are proper generalized subsets of \(C\); since
\(\varepsilon(C)\) is infinite and
\(C=(C\cap B_1)\cup(C\cap B_2)\), one of these intersections is again an
infinite emitter, contradicting minimality.
\end{remark}

Let \(\mathcal G^{\geq 1}\) denote the set of finite paths in \(\mathcal G\)
with positive length, and let \(X_\infty\) denote the set of infinite paths in
\(\mathcal G\).  We define the finite boundary part by
\[
   X_{\mathrm{fin}}
   :=
   \{(\alpha,A):\alpha\in\mathcal G^{\geq 1},\ A\in\mathcal A,\ 
   A\subseteq r(\alpha)\}
   \cup
   \{(A,A):A\in\mathcal A\}.
\]
The boundary ultrapath space used in this paper is the set
\[
   X_{\mathcal G}:=X_\infty\cup X_{\mathrm{fin}}.
\]
At this point no topology is needed; we use only the underlying set and the
tail structure.

We define a source map \(s_X\) on \(X_{\mathcal G}\) as follows.  If
\(x=e_1e_2\cdots\in X_\infty\), then \(s_X(x)=s(e_1)\).  If
\(x=(\alpha,A)\in X_{\mathrm{fin}}\), with
\(\alpha=e_1\cdots e_n\) and \(n\geq 1\), then \(s_X(x)=s(e_1)\).  Finally,
if \(x=(A,A)\), then \(s_X(x)=A\).

For \(B\in\mathcal G^0\), we shall write \(s_X(x)\preceq B\) to mean that
\(s_X(x)\in B\) when \(s_X(x)\) is a vertex, and \(s_X(x)\subseteq B\) when
\(s_X(x)\) is a generalized vertex.  Thus, for example,
\(s_X(A,A)\preceq B\) means \(A\subseteq B\).

Let \(\beta\in\mathcal G^{\geq 1}\).  If \(x\in X_{\mathcal G}\) satisfies
\(s_X(x)\preceq r(\beta)\), we define the concatenation \(\beta x\) by the
following rules.  If \(x=e_1e_2\cdots\in X_\infty\), then
\(\beta x=\beta e_1e_2\cdots\).  If \(x=(\alpha,A)\in X_{\mathrm{fin}}\) with
\(|\alpha|\geq 1\), then $
   \beta x=(\beta\alpha,A).$
Finally, if \(x=(A,A)\), then $
   \beta x=(\beta,A).$
In each case the condition \(s_X(x)\preceq r(\beta)\) is exactly the condition
that the displayed concatenation is defined.




We now define the canonical branching system on \(X_{\mathcal G}\).  The
sets \(D_B\) are determined by the source of a boundary ultrapath, while the
sets \(R_e\) consist of those boundary ultrapaths whose first edge is \(e\).

\begin{proposition}\label{standard branch sys in G}
Let \(\mathcal G\) be an ultragraph.  For each \(e\in\mathcal G^1\), define
\[
   R_e
   =
   \{x=e_1e_2\cdots\in X_\infty:e_1=e\}
   \cup
   \{(\alpha,A)\in X_{\mathrm{fin}}:|\alpha|\geq 1
      \text{ and }\alpha_1=e\}.
\]
For each \(B\in\mathcal G^0\), define
$
   D_B=\{x\in X_{\mathcal G}:s_X(x)\preceq B\}$
  and,
for each \(e\in\mathcal G^1\), define
$ f_e:D_{r(e)}\longrightarrow R_e$ by $ f_e(x)=ex,$
where \(ex\) denotes concatenation.  Then
\[
   \{D_B,R_e,f_e:B\in\mathcal G^0,\ e\in\mathcal G^1\}
\]
is an algebraic \(\mathcal G\)-branching system on \(X_{\mathcal G}\).
\end{proposition}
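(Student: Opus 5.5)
We verify the four axioms (i)–(iv) of an algebraic \(\mathcal G\)-branching system directly from the definitions, and check that each \(f_e\) is a well-defined bijection \(D_{r(e)}\to R_e\).

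\emph{Axioms (i)–(iii).} The sets \(R_e\) are pairwise disjoint because the first edge of a boundary ultrapath of positive length is uniquely determined. This holds for infinite paths and for pairs \((\alpha,A)\) with \(|\alpha|\geq1\); pairs \((A,A)\) lie in no \(R_e\). For (iii), if \(x\in R_e\) then \(s_X(x)=s(e)\), so \(x\in D_{s(e)}\).

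For the Boolean relations (ii), first take \(x\) whose source is a vertex. Then \(s_X(x)\in A\cap B\) if and only if it lies in both \(A\) and \(B\), and \(s_X(x)\in A\cup B\) if and only if it lies in \(A\) or in \(B\). Now take \(x=(C,C)\) with \(C\in\mathcal A\). Intersection is again trivial. For the union, we use the minimality consequence in the Remark: \(C\subseteq A\cup B\) implies \(C\subseteq A\) or \(C\subseteq B\). This gives \(D_{A\cup B}\subseteq D_A\cup D_B\); the reverse inclusion is obvious. Finally, \(D_\emptyset=\emptyset\), since no vertex lies in \(\emptyset\) and every \(C\in\mathcal A\) is nonempty.

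\emph{The maps \(f_e\).} For \(x\in D_{r(e)}\), the concatenation \(ex\) is defined precisely because \(s_X(x)\preceq r(e)\). We must check that \(ex\in X_{\mathcal G}\).
\begin{itemize}
\item For an infinite path, \(ex\) is an infinite path.
\item For \(x=(\alpha,A)\), we have \(A\subseteq r(\alpha)=r(e\alpha)\).
\item For \(x=(A,A)\), we have \(A\subseteq r(e)\), so \((e,A)\in X_{\mathrm{fin}}\).
\end{itemize}
In each case the first edge of \(ex\) is \(e\), so \(ex\in R_e\).

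Injectivity holds because \(x\) can be recovered from \(ex\) by deleting the first edge. If the remaining path part is empty, which happens when \(ex=(e,A)\), we set \(x=(A,A)\); this is consistent since \((e,A)\) arises only from \((A,A)\).

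For surjectivity, take \(y\in R_e\).
\begin{itemize}
\item If \(y=ee_2e_3\cdots\), then \(s(e_2)\in r(e)\), so \(x=e_2e_3\cdots\in D_{r(e)}\) and \(y=ex\).
\item If \(y=(e\alpha',A)\) with \(|\alpha'|\geq1\), then \(s(\alpha')\in r(e)\), and \(A\subseteq r(\alpha')\) because \(r(e\alpha')=r(\alpha')\). Hence \(x=(\alpha',A)\in D_{r(e)}\).
\item If \(y=(e,A)\), then \(A\subseteq r(e)\), so \((A,A)\in D_{r(e)}\) maps to \(y\).
\end{itemize}

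\emph{Axiom (iv), the main obstacle.} Let \(v\) be a regular vertex. The inclusion \(\bigsqcup_{s(e)=v}R_e\subseteq D_v\) follows from (iii). For the converse, take \(x\in D_v\). Then \(s_X(x)\) is the vertex \(v\), or \(x=(C,C)\) with \(C\subseteq\{v\}\), that is, \(C=\{v\}\).

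The second case is impossible. Since \(v\) is regular, \(\{v\}\) is not a sink, and it is not a minimal infinite emitter because \(|\varepsilon(\{v\})|<\infty\). It is also not a minimal sink, since minimal sinks are infinite. So \(\{v\}\notin\mathcal A\).

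Hence \(x\) has positive length, or is an infinite path, with \(s(e_1)=v\). Therefore \(x\in R_{e_1}\) with \(s(e_1)=v\). This is the one place where the specific choice of \(\mathcal A\) matters, together with the union step in (ii); both rest on the terminal boundary sets being correctly chosen.
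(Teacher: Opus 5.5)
Your proposal is correct and follows essentially the same route as the paper. Both verify the axioms directly, use the minimality property of terminal boundary sets to get $D_{A\cup B}\subseteq D_A\cup D_B$, and use the fact that $\{v\}\notin\mathcal A$ for a regular vertex $v$ to obtain (iv). You are slightly more explicit in a few places: nonemptiness of the elements of $\mathcal A$ for $D_\emptyset=\emptyset$, the check that $ex\in X_{\mathcal G}$, and the exclusion of minimal sinks in (iv). The paper instead writes down $f_e^{-1}$ explicitly.
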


\begin{proof}
The sets \(R_e\) are pairwise disjoint, because a boundary ultrapath has at
most one first edge.  Also, if \(x\in R_e\), then \(x=ey\) for some
\(y\in D_{r(e)}\), and hence \(s_X(x)=s(e)\).  Thus
\(R_e\subseteq D_{s(e)}\).

We next verify the Boolean relations among the sets \(D_B\).  Clearly
\(D_\emptyset=\emptyset\).  Let \(A,B\in\mathcal G^0\).  If \(x\in X_\infty\)
or \(x=(\alpha,C)\in X_{\mathrm{fin}}\) with \(|\alpha|\geq1\), then
\(s_X(x)\) is a vertex.  Hence, for such \(x\), membership in
\(D_A\cap D_B\) is equivalent to \(s_X(x)\in A\cap B\), and membership in
\(D_A\cup D_B\) is equivalent to \(s_X(x)\in A\cup B\).

It remains to consider points of the form \(x=(C,C)\), with
\(C\in\mathcal A\).  In this case \(x\in D_A\) precisely when
\(C\subseteq A\).  Therefore \(x\in D_A\cap D_B\) precisely when
\(C\subseteq A\cap B\), so \(D_A\cap D_B=D_{A\cap B}\).  Similarly,
\(x\in D_{A\cup B}\) precisely when \(C\subseteq A\cup B\).  By the
minimality property of terminal boundary sets recorded above, this implies
\(C\subseteq A\) or \(C\subseteq B\).  Hence \(x\in D_A\cup D_B\).  The
reverse inclusion is immediate, and so \(D_A\cup D_B=D_{A\cup B}\).

Now let \(v\in G^0\) be a regular vertex.  We prove that
\[
   D_v=\bigsqcup_{s(e)=v}R_e.
\]
If \(x\in D_v\), then \(x\) cannot be of the form \((C,C)\): indeed,
\(C\subseteq\{v\}\) would force \(C=\{v\}\), while a regular vertex is
neither a sink nor an infinite emitter.  Thus either
\(x=e_1e_2\cdots\in X_\infty\) or
\(x=(\alpha,C)\in X_{\mathrm{fin}}\) with \(|\alpha|\geq1\).  In both cases
the first edge is defined, say \(e\), and the condition \(x\in D_v\) gives
\(s(e)=v\).  Hence \(x\in R_e\) for some \(e\in s^{-1}(v)\).  The reverse
inclusion follows from \(R_e\subseteq D_{s(e)}\).  Since the sets \(R_e\) are
pairwise disjoint, the union is disjoint.

It remains to show that each \(f_e\) is a bijection from \(D_{r(e)}\) onto
\(R_e\).  By definition, \(f_e(x)=ex\), and the condition
\(x\in D_{r(e)}\) is exactly the condition that this concatenation is
defined.  Thus \(f_e(D_{r(e)})\subseteq R_e\).  Conversely, every element of
\(R_e\) begins with \(e\), and removing this first edge gives an element of
\(D_{r(e)}\).  More explicitly, the inverse \(f_e^{-1}:R_e\to D_{r(e)}\) is
given as follows.  If \(z=ey\in X_\infty\), then \(f_e^{-1}(z)=y\).  If
\(z=(e\alpha,A)\in X_{\mathrm{fin}}\) with \(|e\alpha|\geq2\), then
\(f_e^{-1}(z)=(\alpha,A)\).  Finally, if \(z=(e,A)\), then
\(f_e^{-1}(z)=(A,A)\).  Hence \(f_e\) is bijective.

Therefore all the defining conditions of an algebraic
\(\mathcal G\)-branching system are satisfied.
\end{proof}

We next introduce the tail-equivalence relation associated with the boundary
ultrapath space. For this we define the shift map, which removes the first edge whenever such an edge
is present, and fixes the terminal boundary points. We make this precise below.

Define \(\sigma:X_{\mathcal G}\to X_{\mathcal G}\) as follows.  If
\(x=e_1e_2e_3\cdots\in X_\infty\), then
\[
   \sigma(x)=e_2e_3\cdots.
\]
If \(x=(\alpha,A)\in X_{\mathrm{fin}}\), with
\(\alpha=e_1\cdots e_n\), then
\[
   \sigma(\alpha,A)=
   \begin{cases}
   (e_2\cdots e_n,A), & n\geq 2,\\
   (A,A), & n=1.
   \end{cases}
\]
Finally, for a terminal boundary point \((A,A)\in X_{\mathrm{fin}}\), we set
\[
   \sigma(A,A)=(A,A).
\]

\begin{definition}
Two elements \(x,y\in X_{\mathcal G}\) are \emph{tail equivalent} if there
exist \(m,n\in\mathbb N_0=\N\cup\{0\}\) such that
\[
   \sigma^m(x)=\sigma^n(y).
\]
In this case we write \(x\sim y\).  This is an equivalence relation.  The
equivalence class of \(x\) is denoted by \(\mathcal O(x)\), and is called the
\emph{tail orbit}, or simply the \emph{orbit}, of \(x\).
\end{definition}

\begin{remark}\label{orbits in X_G}
The following elementary facts will be used repeatedly.
\begin{enumerate}[(i)]

\item For each terminal boundary point \((A,A)\in X_{\mathrm{fin}}\), one has
\[
   \mathcal O((A,A))
   =
   \{(\alpha,A)\in X_{\mathrm{fin}}:|\alpha|\geq 1\}
   \cup
   \{(A,A)\}.
\]
Equivalently, \(\mathcal O((A,A))\) consists precisely of the finite boundary
ultrapaths whose terminal boundary set is \(A\).

\item If \((\alpha,A),(\beta,B)\in X_{\mathrm{fin}}\) and \(A\neq B\), then
\[
   \mathcal O((\alpha,A))\cap \mathcal O((\beta,B))=\emptyset.
\]
Indeed, \((\alpha,A)\sim(A,A)\) and \((\beta,B)\sim(B,B)\), while
\((A,A)\not\sim(B,B)\) when \(A\neq B\).

\item If \(x\in X_\infty\) and \(y\in X_{\mathrm{fin}}\), then
\[
   \mathcal O(x)\cap \mathcal O(y)=\emptyset.
\]
This follows because every iterate of an infinite path is still an infinite
path, whereas every finite boundary ultrapath eventually shifts to a terminal
boundary point.
\end{enumerate}
\end{remark}

\begin{proposition}\label{orbits are invariant sets}
Let \(\mathcal G\) be an ultragraph, and consider the
\(\mathcal G\)-branching system on \(X_{\mathcal G}\) constructed in
Proposition~\ref{standard branch sys in G}.  If \(Y\subseteq X_{\mathcal G}\)
is a union of tail orbits, then \(Y\) is \(\mathcal G\)-invariant.
\end{proposition}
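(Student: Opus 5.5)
The plan is to check the two invariance conditions directly, using only how \(f_e\) and \(f_e^{-1}\) interact with the shift \(\sigma\).

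\textbf{Key identities.} First record that for every \(e\in\mathcal G^1\) and every \(x\in D_{r(e)}\),
\[
   \sigma(f_e(x))=x .
\]
Equivalently, for every \(z\in R_e\), \(\sigma(z)=f_e^{-1}(z)\). This is a case check against the explicit formulas in the proof of Proposition~\ref{standard branch sys in G}, with three cases:
\begin{enumerate}[(a)]
\item if \(x\in X_\infty\), then \(ex\) is an infinite path and \(\sigma\) deletes \(e\);
\item if \(x=(\alpha,A)\) with \(|\alpha|\geq1\), then \(\sigma(e\alpha,A)=(\alpha,A)\);
\item if \(x=(A,A)\), then \(f_e(x)=(e,A)\) and \(\sigma(e,A)=(A,A)\) by the case \(n=1\) of the definition of \(\sigma\).
\end{enumerate}

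\textbf{Orbit consequences.} From these identities we get \(f_e(x)\sim x\), with \(m=1\) and \(n=0\) in the definition of tail equivalence. Likewise \(f_e^{-1}(z)\sim z\) for \(z\in R_e\). Hence \(f_e\) maps each point into its own tail orbit, and so does \(f_e^{-1}\).

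\textbf{Conclusion.} Let \(Y=\bigcup_{i}\mathcal O(x_i)\) be a union of tail orbits.
\begin{enumerate}[(1)]
\item If \(y\in Y\cap D_{r(e)}\), then \(y\in\mathcal O(x_i)\) for some \(i\). Since \(f_e(y)\sim y\sim x_i\), transitivity of \(\sim\) gives \(f_e(y)\in\mathcal O(x_i)\subseteq Y\).
\item Similarly, \(f_e^{-1}(Y\cap R_e)\subseteq Y\).
\end{enumerate}
These are exactly the two conditions for \(\mathcal G\)-invariance.

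The only point needing care is case (c) above, namely that \(\sigma\) sends \((e,A)\) to \((A,A)\) and not to itself. This holds by definition, since the fixed-point rule of \(\sigma\) applies only to the terminal points \((A,A)\). Transitivity of \(\sim\) is taken as given, as the paper asserts it is an equivalence relation.
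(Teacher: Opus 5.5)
Your proposal is correct and follows essentially the same route as the paper, which likewise uses \(\sigma(ey)=y\) and \(f_e^{-1}(z)=\sigma(z)\) to see that \(f_e\) and \(f_e^{-1}\) keep each point in its own tail orbit. The only cosmetic difference is that the paper proves invariance for a single orbit and then passes to unions, and your explicit check of the case \(x=(A,A)\) fills in a detail the paper leaves implicit.
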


\begin{proof}
It is enough to prove that each orbit \(\mathcal O(x)\) is
\(\mathcal G\)-invariant.  Let \(e\in\mathcal G^1\).

If \(y\in \mathcal O(x)\cap D_{r(e)}\), then \(f_e(y)=ey\).  Since
\(\sigma(ey)=y\), we have \(f_e(y)\sim y\), and hence
\(f_e(y)\in\mathcal O(x)\).  Therefore
$
   f_e(\mathcal O(x)\cap D_{r(e)})\subseteq \mathcal O(x).$

Similarly, if \(z\in \mathcal O(x)\cap R_e\), then \(z\) begins with the edge
\(e\), and \(f_e^{-1}(z)=\sigma(z)\).  Hence \(f_e^{-1}(z)\sim z\), so
\(f_e^{-1}(z)\in\mathcal O(x)\).  Thus
$
   f_e^{-1}(\mathcal O(x)\cap R_e)\subseteq \mathcal O(x).$

Therefore each orbit is \(\mathcal G\)-invariant, and any union of orbits is
\(\mathcal G\)-invariant as well.
\end{proof}

\section{Graph-theoretic RFD conditions for ultragraphs}
\label{sec:RFD-conditions-ultragraphs}

We now formulate the graph-theoretic conditions on an ultragraph that will be
used throughout the paper.  They are modeled on Bellier's characterization of
residual finite-dimensionality for graph \(C^*\)-algebras
\cite{Bellier2026}.  The main difference is that, in the ultragraph setting, the
terminal alternative must be expressed in terms of terminal boundary sets in
the boundary ultrapath space.

\begin{definition}[Infinite receivers]
Let \(\mathcal G\) be an ultragraph.  A vertex \(v\in G^0\) is called an
\emph{infinite receiver} if the set
\[
   \{e\in\mathcal G^1:v\in r(e)\}
\]
is infinite.  We say that \(\mathcal G\) has \emph{no infinite receivers} if
the above set is finite for every \(v\in G^0\).
\end{definition}

\begin{definition}[Infinite backward chains]
An \emph{infinite backward chain} in \(\mathcal G\) is an infinite sequence
of distinct edges
\[
   \cdots e_3e_2e_1
\]
such that \(s(e_i)\in r(e_{i+1})\) for every \(i\geq 1\).  We say that
\(\mathcal G\) has \emph{no infinite backward chains} if no such sequence
exists.
\end{definition}

\begin{definition}[Reaching terminal boundary sets and cycles]
Let \(\mathcal G\) be an ultragraph and let \(v\in G^0\).

We say that \(v\) \emph{reaches a terminal boundary set} if either
\(\{v\}\in\mathcal A\), or there exist a path \(\alpha\in\mathcal G^*\) with
\(|\alpha|\geq 1\) and \(s(\alpha)=v\), and a terminal boundary set
\(A\in\mathcal A\), such that \(A\subseteq r(\alpha)\).

We say that \(v\) \emph{reaches a cycle} if either \(v\) is a vertex on a
cycle, or there exists a path \(\alpha\in\mathcal G^*\), with
\(|\alpha|\geq 1\) and \(s(\alpha)=v\), such that \(r(\alpha)\) contains a
vertex lying on a cycle.
\end{definition}

\begin{remark}
The preceding definition can be read in terms of the boundary ultrapath
space.  A vertex \(v\) reaches a terminal boundary set \(A\in\mathcal A\) if
and only if there is a finite boundary ultrapath \((\alpha,A)\in
X_{\mathcal G}\) whose initial vertex is \(v\), with the length-zero case
corresponding to \(A=\{v\}\).  Similarly, \(v\) reaches a cycle if and only
if there is an infinite eventually periodic path \(x=\beta c^\infty\in
X_{\mathcal G}\), where \(c\) is a cycle and the initial vertex of \(x\) is
\(v\).  In particular, a sink or a vertex infinite emitter reaches a terminal
boundary set, and a vertex lying on a cycle reaches a cycle.
\end{remark}

\begin{definition}[Graph-theoretic RFD conditions]
We say that an ultragraph \(\mathcal G\) satisfies the
\emph{graph-theoretic RFD conditions} if the following hold:
\begin{enumerate}[(B1)]
\item \(\mathcal G\) has no infinite receivers;
\item no cycle in \(\mathcal G\) has an exit;
\item \(\mathcal G\) has no infinite backward chains;
\item every vertex \(v\in G^0\) reaches a terminal boundary set or a cycle.
\end{enumerate}
\end{definition}

\begin{remark}
Condition \emph{(B4)} says that every vertex can be continued, after
possibly following a finite path, either to a terminal boundary set
\(A\in\mathcal A\) or to a cycle.  Thus the terminal alternatives are
singleton sinks, minimal infinite emitters, and minimal sinks.  In the graph
case, these reduce to the usual terminal alternatives: sinks, infinite
emitters, and cycles.
\end{remark}

\begin{remark}
When \(\mathcal G\) is an ordinary graph, the graph-theoretic RFD conditions
above reduce to Bellier's graph conditions \cite{Bellier2026}: no infinite
receivers, no cycles with exits, no infinite backward chains, and every
vertex reaches a sink, a cycle, or an infinite emitter.
\end{remark}

\section{Finite-dimensional branching-system representations}
\label{sec:finite-dimensional-branching-representations}

In this section we construct the finite branching systems that will be used to
separate elements of ultragraph algebras.  The basic idea is to
start with the canonical branching system on \(X_{\mathcal G}\) and then
restrict it to finite tail orbits.  Under the graph-theoretic RFD conditions,
the relevant orbits are finite, and hence the associated representations are
finite-dimensional.

A \emph{terminal tail} is an element \(\xi\in X_{\mathcal G}\) of one of the
following two forms:
\begin{enumerate}[(i)]
\item \(\xi=(A,A)\), where \(A\in\mathcal A\);
\item \(\xi=c^\infty\), where \(c\) is a cycle without exits.
\end{enumerate}

Recall that if \(\xi=(A,A)\) then, by Remark~\ref{orbits in X_G},
$
   \mathcal O(\xi)
   =
   \{(\alpha,A)\in X_{\mathrm{fin}}:|\alpha|\geq 1\}
   \cup
   \{(A,A)\},$
and hence \(\mathcal O(\xi)\subseteq X_{\mathrm{fin}}\).  If
\(\xi=c^\infty\), then \(\mathcal O(\xi)\subseteq X_\infty\); its elements
are precisely the infinite paths that are eventually equal, after deleting
finitely many initial edges, to \(c^\infty\). We will use this in the proof below.

\begin{lemma}\label{finiteorbit}
Assume that \(\mathcal G\) satisfies the graph-theoretic RFD conditions, and
let \(\xi\) be a terminal tail.  Then \(\mathcal O(\xi)\) is finite.
\end{lemma}

\begin{proof}
Let \(\xi\) be a terminal tail.
Suppose, towards a contradiction, that \(\mathcal O(\xi)\) is infinite.  We
consider the tree of finite prefixes that can be attached to \(\xi\), ordered
by extension on the left.  This tree is finitely branching.  Indeed, once the
rightmost target is a vertex, the number of possible edges that can be added
on the left is finite because \(\mathcal G\) has no infinite receivers.  If
the rightmost target is a terminal boundary set \(A\), then there are only
finitely many edges \(e\) with \(A\subseteq r(e)\); otherwise any fixed
vertex of the nonempty set \(A\) would be an infinite receiver.

By K\"onig's lemma, an infinite finitely branching prefix tree contains an
infinite branch.  Such a branch either gives an infinite backward chain, or
else eventually repeats a finite cycle.  The first possibility is excluded by
the graph-theoretic RFD conditions.

It remains to rule out the possibility that infinitely many distinct prefixes
come from repeatedly going around a cycle.  Suppose such a cycle appears in
the prefix tree.  If this cycle is not the terminal cycle \(c\) in the case
\(\xi=c^\infty\), then some path must eventually leave the cycle in order to
reach the terminal tail \(\xi\).  Leaving the cycle means that, at some edge
\(e_i\) of the cycle, either one follows an edge \(f\neq e_{i+1}\) with
\(s(f)\in r(e_i)\), or one reaches a sink contained in \(r(e_i)\).  In either
case the cycle has an exit, contradicting the graph-theoretic RFD
conditions.

The only remaining possibility is that \(\xi=c^\infty\) and the cycle
repeated in the prefix tree is exactly the terminal cycle \(c\).  But this
case cannot account for infinitely many distinct elements of
\(\mathcal O(\xi)\).  Indeed, for every \(k\geq 0\), the infinite paths
\(\alpha c^k c^\infty\) and \(\alpha c^\infty\) are tail equivalent: after
deleting the initial segment \(\alpha c^k\) from the first and the initial
segment \(\alpha\) from the second, both tails become \(c^\infty\).  Hence
repeating the terminal cycle produces no new orbit elements.

Thus every possible source of infinitely many distinct prefixes has been
ruled out: an infinite branch gives an infinite backward chain, a nonterminal
cycle would have an exit, and repetitions of the terminal cycle do not
produce distinct orbit elements.  This contradicts the assumption that
\(\mathcal O(\xi)\) is infinite.

\end{proof}

\begin{construction}[Finite orbit branching systems]
\label{finite branching system for terminal tails}
Let \(\mathcal G\) be an ultragraph, let \(\xi\) be a terminal tail, and set $
   X_\xi:=\mathcal O(\xi).$
By Proposition~\ref{orbits are invariant sets}, \(X_\xi\) is invariant for
the canonical branching system on \(X_{\mathcal G}\).  Hence
Proposition~\ref{restricted branching system} gives an algebraic
\(\mathcal G\)-branching system on \(X_\xi\), denoted by
\[
   \{R_e^\xi,D_A^\xi,f_e^\xi:e\in\mathcal G^1,\ A\in\mathcal G^0\}.
\]
Explicitly, $
   D_A^\xi=\{\eta\in X_\xi:s_X(\eta)\preceq A\}$, $   R_e^\xi=\{e\eta:\eta\in D_{r(e)}^\xi\}$,
and $
   f_e^\xi:D_{r(e)}^\xi\longrightarrow R_e^\xi$ is given by $
   f_e^\xi(\eta)=e\eta.$
   
Notice that if \(\mathcal G\) satisfies the graph-theoretic RFD conditions, then
Lemma~\ref{finiteorbit} implies that \(X_\xi\) is finite.  In that case this
restricted branching system is finite, and the associated representation of
\(L_K(\mathcal G)\) is finite-dimensional.
\end{construction}

The next lemma shows that finite orbit branching systems detect the
generalized vertex projections.

\begin{lemma}[Detecting generalized vertices]\label{detecting vertices}
Assume that \(\mathcal G\) satisfies the graph-theoretic RFD conditions.  For
every nonempty \(B\in\mathcal G^0\), there exists a finite algebraic
\(\mathcal G\)-branching system \(X\) such that \(D_B\neq\emptyset\).
Consequently, the induced finite-dimensional representation of
\(L_K(\mathcal G)\) does not vanish on \(p_B\).
\end{lemma}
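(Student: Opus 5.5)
The plan is to pick a vertex \(v\in B\) and use (B4) to find a terminal tail \(\xi\) whose orbit contains a point with source \(v\). The finite branching system will be the orbit system \(X_\xi=\mathcal O(\xi)\) of Construction~\ref{finite branching system for terminal tails}. It is finite by Lemma~\ref{finiteorbit}, and we want a point \(\eta\in X_\xi\) with \(s_X(\eta)=v\), so that \(s_X(\eta)\preceq B\) and hence \(\eta\in D_B^\xi\).

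Concretely, I would split into two cases according to (B4). First suppose \(v\) reaches a terminal boundary set. If \(\{v\}\in\mathcal A\), take \(\xi=(\{v\},\{v\})\) and \(\eta=\xi\); then \(s_X(\eta)=\{v\}\subseteq B\). Otherwise there are \(\alpha\) with \(|\alpha|\ge1\), \(s(\alpha)=v\), and \(A\in\mathcal A\) with \(A\subseteq r(\alpha)\). Take \(\xi=(A,A)\) and \(\eta=(\alpha,A)\in\mathcal O(\xi)\) by Remark~\ref{orbits in X_G}(i); then \(s_X(\eta)=v\in B\).

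Now suppose \(v\) reaches a cycle \(c\). By (B2), \(c\) has no exits, so \(\xi=c^\infty\) is a terminal tail. If \(v\) lies on \(c\), rotate \(c\) so that it starts at \(v\); the rotated cycle \(c'\) still has no exits and \(c'^\infty\sim c^\infty\), so take \(\eta=c'^\infty\). Otherwise take \(\alpha\) with \(s(\alpha)=v\) and some vertex \(w\in r(\alpha)\) on \(c\). Let \(c''\) be the rotation of \(c\) starting at \(w\), and take \(\eta=\alpha c''^\infty\). This is a genuine infinite path since \(s(c'')=w\in r(\alpha)\) and, because there are no exits, each \(r(e_i)=\{s(e_{i+1})\}\). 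Deleting \(\alpha\) and a suitable initial segment of \(c''\) shows \(\eta\sim c^\infty\). In all cases \(\eta\in D^\xi_B\neq\emptyset\).

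Finally, the induced representation \(\pi_{X_\xi}\) acts on the finite-dimensional space \(M_K(X_\xi)\), and \(\pi_{X_\xi}(p_B)\chi_{\{\eta\}}=\chi_{D_B^\xi}\chi_{\{\eta\}}=\chi_{\{\eta\}}\neq0\). Hence \(\pi_{X_\xi}(p_B)\neq0\). The only mildly delicate step is the cycle case. There one must check that the rotation bookkeeping produces a valid element of \(X_\infty\) in the same orbit, and the no-exit property (B2) is what guarantees both that \(\xi\) is a terminal tail and that the concatenation is well defined.
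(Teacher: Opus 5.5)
Your proof is correct and follows the paper's argument. You pick \(v\in B\), use (B4), together with (B2) in the cycle case, to produce a terminal tail \(\xi\) and a point \(\eta\in\mathcal O(\xi)\) with \(s_X(\eta)\preceq B\), and invoke Lemma~\ref{finiteorbit} for finiteness; you just spell out the case analysis the paper leaves implicit, including the length-zero case where \(s_X(\eta)=\{v\}\) rather than \(v\) (one small remark: \(\alpha c''^\infty\) is a valid infinite path simply because \(c''\) is closed, so the no-exit property is needed only to make \(c^\infty\) a terminal tail).
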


\begin{proof}
Let \(B\in\mathcal G^0\) be nonempty, and choose \(v\in B\).  By condition
\emph{(B4)}, the vertex \(v\) reaches either a terminal boundary set or a
cycle.  Hence there exists a terminal tail \(\xi\) and an element
\(\eta\in\mathcal O(\xi)\) whose initial source is \(v\), that is,
\(s_X(\eta)=v\).

By Construction~\ref{finite branching system for terminal tails}, the
restricted branching system on \(X_\xi=\mathcal O(\xi)\) is finite.  Since
\(s_X(\eta)=v\in B\), we have \(\eta\in D_B^\xi\).  Thus
\(D_B^\xi\neq\emptyset\) and hence the representation induced by this finite
branching system satisfies \(\pi(p_B)\neq 0\).
\end{proof}

We now construct the finite cyclic branching systems that will be used to
detect nonzero Laurent polynomials in no-exit cycles.

\begin{construction}[Finite cyclic branching systems]
\label{con:ultra-finite-cyclic}
Suppose that \(\mathcal G\) satisfies the graph-theoretic RFD conditions.  Let
\(c=e_1\cdots e_m\) be a cycle without exits, based at \(v=s(e_1)\), and let
\(\xi_c=c^\infty\).  By Lemma~\ref{finiteorbit}, the orbit $   O:=\mathcal O(\xi_c)$
is finite.  Construction~\ref{finite branching system for terminal tails}
gives an algebraic \(\mathcal G\)-branching system on \(O\), denoted by $
   \{R_e^O,D_A^O,f_e^O:e\in\mathcal G^1,\ A\in\mathcal G^0\}.$
Thus, for every \(A\in\mathcal G^0\) and every \(e\in\mathcal G^1\), $
   D_A^O=\{\eta\in O:s_X(\eta)\preceq A\},$
   $ R_e^O=\{e\eta:\eta\in D_{r(e)}^O\},$
and \(f_e^O:D_{r(e)}^O\to R_e^O\) is given by \(f_e^O(\eta)=e\eta\).

Fix \(q\geq 1\), and set $
   X_{c,q}:=O\times \mathbb Z/q\mathbb Z.$
For \(A\in\mathcal G^0\) and \(e\in\mathcal G^1\), define
\[
   D_A^{c,q}:=D_A^O\times \mathbb Z/q\mathbb Z,
   \qquad
   R_e^{c,q}:=R_e^O\times \mathbb Z/q\mathbb Z.
\]
Choose one distinguished edge of the cycle, say \(e_m\), and define
\[
   \epsilon(e)=
   \begin{cases}
   1, & e=e_m,\\
   0, & e\neq e_m,
   \end{cases}
\]
with values in \(\mathbb Z/q\mathbb Z\).  For every \(e\in\mathcal G^1\),
define
\[
   f_e^{c,q}:D_{r(e)}^{c,q}\longrightarrow R_e^{c,q}
\]
by
\[
   f_e^{c,q}(\eta,j)
   =
   \bigl(f_e^O(\eta),j+\epsilon(e)\bigr)
   =
   (e\eta,j+\epsilon(e)).
\]
The inverse map is
\[
   (z,j)\longmapsto
   \bigl((f_e^O)^{-1}(z),j-\epsilon(e)\bigr),
   \qquad z\in R_e^O.
\]
The branching-system relations follow coordinatewise from the corresponding
relations on \(O\).  Hence $
   \{R_e^{c,q},D_A^{c,q},f_e^{c,q}:e\in\mathcal G^1,\ A\in\mathcal G^0\}$ 
is a finite algebraic \(\mathcal G\)-branching system.
\end{construction}

Let \(\pi_{c,q}\) be the representation of \(L_K(\mathcal G)\) associated
with the finite cyclic branching system above, and put
$   s_c:=s_{e_1}\cdots s_{e_m}.$
Consider the subspace
\[
   W_q
   =
   \operatorname{span}_K
   \{\delta_{(c^\infty,j)}:j\in\mathbb Z/q\mathbb Z\}
   \subseteq M_K(X_{c,q}).
\]
On \(W_q\), the operator \(\pi_{c,q}(s_c)\) is the cyclic permutation
\[
   \delta_{(c^\infty,j)}
   \longmapsto
   \delta_{(c^\infty,j+1)}.
\]
Indeed, with our branching-system convention, \(\pi_{c,q}(s_e)\) prefixes
the edge \(e\).  Thus the product \(\pi_{c,q}(s_{e_1})\cdots\pi_{c,q}(s_{e_m})\) sends \((c^\infty,j)\) to \((cc^\infty,j+1)\), because the second coordinate is increased exactly once, at the distinguished edge \(e_m\). Since \(cc^\infty\) is identified with \(c^\infty\), we obtain \[ \pi_{c,q}(s_c)\delta_{(c^\infty,j)} = \delta_{(c^\infty,j+1)}. \]

\begin{lemma}[Detecting Laurent polynomials in no-exit cycles]
\label{detecting laurent polynomials}
Let \(K\) be any field, and let \(c=e_1\cdots e_m\) be a no-exit cycle in
\(\mathcal G\).  Assume that \(O=\mathcal O(c^\infty)\) is finite, and let
\(\pi_{c,q}\) be the representation associated with the finite cyclic
branching system on \(X_{c,q}=O\times\mathbb Z/q\mathbb Z\).  If
\[
   0\neq p(t)=\sum_{i=a}^{b}\lambda_i t^i\in K[t,t^{-1}],
\]
then there exists \(q\geq 1\) such that $
   \pi_{c,q}(p(s_c))\neq 0,$
where \(s_c=s_{e_1}\cdots s_{e_m}\), and negative powers are interpreted by
\(s_c^{-k}=(s_c^*)^k\).
\end{lemma}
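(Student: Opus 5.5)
We restrict to the subspace \(W_q\) and reduce to a statement about the cyclic shift matrix. The discussion preceding the lemma shows that \(\pi_{c,q}(s_c)\) acts on \(W_q\) as the cyclic permutation \(T\colon\delta_{(c^\infty,j)}\mapsto\delta_{(c^\infty,j+1)}\). We first check that \(\pi_{c,q}(s_c^*)\) also preserves \(W_q\) and acts there as \(T^{-1}\).

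Since \(c\) has no exits, each \(r(e_i)=\{s(e_{i+1})\}\). The representation gives \(\pi(s_e^*)\phi=\chi_{D_{r(e)}}(\phi\circ f_e)\), so \(\pi(s_e^*)\delta_z=\delta_{f_e^{-1}(z)}\) if \(z\in R_e^{c,q}\), and \(0\) otherwise. Now \((c^\infty,j)\) lies in \(R_{e_1}^{c,q}\), and peeling off \(e_1,\dots,e_m\) in turn returns \((c^\infty,j-1)\). The second coordinate decreases exactly once, at \(e_m\). Hence
\[
\pi_{c,q}(s_c^*)=\pi(s_{e_m}^*)\cdots\pi(s_{e_1}^*)
\]
sends \(\delta_{(c^\infty,j)}\) to \(\delta_{(c^\infty,j-1)}\), so on \(W_q\) it equals \(T^{-1}\). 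Therefore \(\pi_{c,q}(s_c^k)\) and \(\pi_{c,q}((s_c^*)^k)\) restrict on \(W_q\) to \(T^{k}\) and \(T^{-k}\), and
\[
\pi_{c,q}(p(s_c))|_{W_q}=\sum_{i=a}^b\lambda_i T^{i}.
\]
Powers are taken modulo \(q\), since \(T^q=1\).

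It remains to choose \(q\) so that this operator is nonzero. Take \(q>b-a\). Apply the operator to \(\delta_{(c^\infty,0)}\); the result is
\[
\sum_{i=a}^b\lambda_i\,\delta_{(c^\infty,\,i \bmod q)}.
\]
The residues \(i \bmod q\) for \(a\le i\le b\) are pairwise distinct because \(b-a<q\). So the coefficients \(\lambda_i\) appear on distinct basis vectors, and since some \(\lambda_i\neq0\) the vector is nonzero. This shows \(\pi_{c,q}(p(s_c))\neq0\).

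The main point requiring care is the bookkeeping of the \(\mathbb Z/q\mathbb Z\)-coordinate under \(s_e^*\), together with the identification \(cc^\infty=c^\infty\) in the orbit. One must confirm that \((c^\infty,j)\) really lies in the domain at each step, which holds because \(c^\infty=e_1e_2\cdots\) and each partial shift begins with the next cycle edge. The rest is immediate, and no hypothesis on \(K\) is needed.
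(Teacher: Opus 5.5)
Your proposal is correct and follows essentially the same route as the paper: you choose \(q>b-a\), restrict to \(W_q\), where \(\pi_{c,q}(s_c)\) is the \(q\)-cycle, and use that the exponents \(a,\dots,b\) are pairwise distinct modulo \(q\). The paper phrases this last step as linear independence of \(I,P_q,\dots,P_q^{q-1}\) after factoring out \(P_q^a\). Your explicit check that \(\pi_{c,q}(s_c^*)\) acts as the inverse shift on \(W_q\) fills in a step the paper uses only implicitly when it writes \(p(P_q)\).
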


\begin{proof}
Choose \(q>b-a\), and let \(W_q\) be as above.  By construction,
\(\pi_{c,q}(s_c)\) restricts to the cyclic permutation
$
   \delta_{(c^\infty,j)}
   \longmapsto
   \delta_{(c^\infty,j+1)}$
on \(W_q\).  Let \(P_q\) denote the corresponding \(q\)-cycle permutation
matrix.

Since \(P_q\) is invertible,
\[
   p(P_q)=P_q^a\sum_{i=a}^{b}\lambda_iP_q^{i-a}.
\]
The powers \(I,P_q,\ldots,P_q^{q-1}\) are linearly independent over \(K\),
because they have pairwise disjoint supports as permutation matrices.  Since
\(0\leq i-a<q\) for every \(i=a,\ldots,b\), the linear combination
\[
   \sum_{i=a}^{b}\lambda_iP_q^{i-a}
\]
is nonzero.  Hence \(p(P_q)\neq 0\).  Therefore the restriction of
\(\pi_{c,q}(p(s_c))\) to \(W_q\) is nonzero, and consequently
\(\pi_{c,q}(p(s_c))\neq 0\).
\end{proof}

\section{Residual finite-dimensionality of ultragraph algebras}
\label{sec:RFD-ultragraph-algebras}

We now prove the main positive results.  The algebraic result follows by
combining the reduction theorem for ultragraph Leavitt path algebras with the
finite-dimensional branching-system representations constructed in the
previous section.  The \(C^*\)-algebraic result uses the same finite branching
systems, viewed as finite-dimensional \(*\)-representations, together with
the ultragraph uniqueness theorem.

\subsection{The algebraic RFD theorem for ultragraph Leavitt path algebras}

\begin{theorem}\label{thm:algebraic-RFD-ultragraph}
Let \(\mathcal G\) be an ultragraph satisfying the graph-theoretic RFD
conditions, and let \(K\) be a field.  Then \(L_K(\mathcal G)\) is
residually finite-dimensional.
\end{theorem}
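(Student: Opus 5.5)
The plan is to combine the Reduction Theorem for ultragraph Leavitt path algebras (\cite{GoncalvesRoyer2021UltragraphReduction}) with the finite branching systems of Section~\ref{sec:finite-dimensional-branching-representations}. The Reduction Theorem says that for every nonzero \(x\in L_K(\mathcal G)\) there are \(\mu,\nu\in\mathcal G^*\) such that \(0\neq s_\mu^* x s_\nu\), where \(s_A=p_A\) for length-zero paths. Moreover, this element has one of two forms:
\begin{enumerate}[(a)]
\item \(\lambda p_B\), with \(0\neq\lambda\in K\) and \(\emptyset\neq B\in\mathcal G^0\);
\item \(p(s_c)\), with \(c\) a cycle without exits and \(0\neq p(t)\in K[t,t^{-1}]\).
\end{enumerate}
In case (b) the identity \(p_{s(c)}\) is used as the degree-zero term, so \(p(s_c)=\sum_i\lambda_i s_c^i\) with \(s_c^0=p_{s(c)}\). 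We should check that this matches the convention in Lemma~\ref{detecting laurent polynomials}. Since \(\pi(s_\mu^*xs_\nu)=\pi(s_\mu)^*\ldots\) is really \(\pi(s_\mu^*)\pi(x)\pi(s_\nu)\) for any homomorphism \(\pi\), it suffices to show that \(\pi(s_\mu^*xs_\nu)\neq 0\) forces \(\pi(x)\neq 0\). So it is enough to find a finite-dimensional representation that does not kill the reduced element.

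In case (a), Lemma~\ref{detecting vertices} provides a finite branching system \(X_\xi\) whose representation satisfies \(\pi(p_B)\neq0\). Then \(\pi(\lambda p_B)=\lambda\pi(p_B)\neq0\). The space \(M_K(X_\xi)\) is finite-dimensional because \(X_\xi\) is finite by Lemma~\ref{finiteorbit}.

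In case (b), condition (B2) together with Lemma~\ref{finiteorbit} shows that \(\mathcal O(c^\infty)\) is finite. Lemma~\ref{detecting laurent polynomials} then gives \(q\) with \(\pi_{c,q}(p(s_c))\neq0\), and \(\pi_{c,q}\) acts on the finite-dimensional space \(M_K(X_{c,q})\). One subtlety is that the degree-zero term \(p_{s(c)}\) must act as the identity on \(W_q\). This holds because every \((c^\infty,j)\) has source \(s(e_1)\), so the computation in that lemma is unaffected. If the reduction is stated with \(p\) based at another vertex of the cycle, we instead rotate \(c\) so that its base point is \(s(e_1)\).

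The main obstacle is stating the Reduction Theorem for ultragraph Leavitt path algebras correctly. In the ultragraph setting the reduced form may be \(\lambda p_B\) for a generalized vertex \(B\) rather than a single vertex, and the multiplications may involve generalized-vertex "paths". This is exactly why Lemma~\ref{detecting vertices} handles arbitrary nonempty \(B\in\mathcal G^0\). We would quote the theorem in the form "\(x\neq0\) implies there exist \(y,z\) with \(yxz\) nonzero and equal to \(\lambda p_B\) or to a nonzero polynomial in a no-exit cycle", which is all the argument needs. With that in hand, the two cases above finish the proof.
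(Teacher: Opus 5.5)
Your proposal is correct and follows essentially the same route as the paper: apply the reduction theorem for ultragraph Leavitt path algebras, then detect \(\lambda p_B\) with the finite orbit branching system of Lemma~\ref{detecting vertices}, and detect a nonzero Laurent polynomial in a no-exit cycle with the cyclic branching systems \(\pi_{c,q}\) of Lemma~\ref{detecting laurent polynomials}. Your additional check that \(p_{s(c)}\) acts as the identity on \(W_q\), so that the degree-zero term is handled correctly, is a harmless detail that the paper leaves implicit.
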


\begin{proof}
Let \(0\neq a\in L_K(\mathcal G)\).  By the reduction theorem for ultragraph
Leavitt path algebras
\cite[Theorem 3.2]{GoncalvesRoyer2021UltragraphReduction}, there exist
elements \(\mu,\nu\in L_K(\mathcal G)\), each a product of edges and ghost
edges, such that \(0\neq \mu a\nu\) and one of the following holds:
\(\mu a\nu=\lambda p_A\), for some \(0\neq\lambda\in K\) and some nonempty
\(A\in\mathcal G^0\); or \(\mu a\nu\) is a nonzero Laurent polynomial in
\(s_c\), where \(c\) is a cycle without exits.

In the first case, Lemma~\ref{detecting vertices} gives a finite
branching-system representation \(\pi\) such that \(\pi(p_A)\neq0\).  Hence
\(\pi(\mu a\nu)=\lambda\pi(p_A)\neq0\), and therefore \(\pi(a)\neq0\).

In the second case, Lemma~\ref{detecting laurent polynomials} gives a finite
cyclic branching-system representation \(\pi_{c,q}\) such that
\(\pi_{c,q}(\mu a\nu)\neq0\).  Again, it follows that
\(\pi_{c,q}(a)\neq0\).

Thus every nonzero element of \(L_K(\mathcal G)\) is detected by a
finite-dimensional representation.  Therefore \(L_K(\mathcal G)\) is RFD.
\end{proof}

Over \(\mathbb C\), the same finite branching-system representations preserve
the involution.  This gives the corresponding \(*\)-algebraic version.

\begin{corollary}\label{cor:star-RFD-ultragraph-LPA}
If \(\mathcal G\) satisfies the graph-theoretic RFD conditions, then
\(L_{\mathbb C}(\mathcal G)\) is \(*\)-RFD.
\end{corollary}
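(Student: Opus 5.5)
The plan is to rerun the proof of Theorem~\ref{thm:algebraic-RFD-ultragraph} with \(K=\mathbb C\), and to check that every separating representation used there is a \(*\)-representation of \(L_{\mathbb C}(\mathcal G)\). Here \(L_{\mathbb C}(\mathcal G)\) carries the standard involution: conjugate-linear, with \(s_e\mapsto s_e^*\) and \(p_A\mapsto p_A\). The representations come from finite algebraic branching systems \(\{D_A,R_e,f_e\}\) on a finite set \(X\), namely the orbit systems of Construction~\ref{finite branching system for terminal tails} and the cyclic systems of Construction~\ref{con:ultra-finite-cyclic}. We identify \(M_{\mathbb C}(X)\) with \(\ell^2(X)\), with orthonormal basis \(\{\delta_x:x\in X\}\).

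The key step is to show that for any finite branching system the induced \(\pi_X\) satisfies \(\pi_X(a^*)=\pi_X(a)^*\), the right-hand side being the Hilbert-space adjoint. Since \(\pi_X\) is a homomorphism and both sides are conjugate-linear and antimultiplicative in \(a\), it suffices to check this on generators.

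\begin{itemize}
\item The operator \(\pi_X(p_A)\) is multiplication by \(\chi_{D_A}\). It is a real diagonal idempotent, hence self-adjoint.
\item On basis vectors, \(\pi_X(s_e)\delta_x=\delta_{f_e(x)}\) for \(x\in D_{r(e)}\), and \(\pi_X(s_e)\delta_x=0\) otherwise.
\item Likewise \(\pi_X(s_e^*)\delta_y=\delta_{f_e^{-1}(y)}\) for \(y\in R_e\), and \(\pi_X(s_e^*)\delta_y=0\) otherwise.
\end{itemize}

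Because \(f_e\) is a bijection from \(D_{r(e)}\) onto \(R_e\), we get
\[
\langle \pi_X(s_e)\delta_x,\delta_y\rangle=\langle \delta_x,\pi_X(s_e^*)\delta_y\rangle
\]
for all \(x,y\in X\). Both sides equal \(1\) exactly when \(y=f_e(x)\), and \(0\) otherwise. So \(\pi_X(s_e^*)=\pi_X(s_e)^*\), a partial permutation matrix and its transpose. This is the step that needs care, and it is easy: the bijectivity of \(f_e\) from Proposition~\ref{restricted branching system} is exactly what makes the matrix a partial permutation. The cyclic system is also a genuine branching system with bijections \(f_e^{c,q}\), so the same argument covers it. Hence every such \(\pi_X\) is a \(*\)-representation on the finite-dimensional Hilbert space \(\ell^2(X)\).

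Finally, let \(0\neq a\in L_{\mathbb C}(\mathcal G)\). The proof of Theorem~\ref{thm:algebraic-RFD-ultragraph} with \(K=\mathbb C\) gives a representation with \(\pi(a)\neq0\), either from Lemma~\ref{detecting vertices} or from Lemma~\ref{detecting laurent polynomials}. In both cases it is of the form \(\pi_X\) for a finite branching system, so by the previous paragraph it is a finite-dimensional \(*\)-representation. Therefore the finite-dimensional \(*\)-representations of \(L_{\mathbb C}(\mathcal G)\) separate points, and \(L_{\mathbb C}(\mathcal G)\) is \(*\)-RFD.
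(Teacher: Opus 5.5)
Your proposal is correct and follows the paper's own argument. The separating representations from Theorem~\ref{thm:algebraic-RFD-ultragraph} are partial permutation representations on finite sets, so they are \(*\)-representations on \(\ell^2(X)\). You make explicit the generator-level adjoint check (using the bijectivity of \(f_e\)) and the reduction to generators, both of which the paper leaves implicit.
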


\begin{proof}
The finite branching-system representations used in the proof of
Theorem~\ref{thm:algebraic-RFD-ultragraph} are partial permutation
representations on finite sets.  After identifying the complex function space
on a finite set \(X\) with \(\ell^2(X)\), the projections \(p_A\) act as
orthogonal diagonal projections and the elements \(s_e\) act as partial
permutation matrices.  Hence these representations are
\(*\)-representations.  Since they separate the points of
\(L_{\mathbb C}(\mathcal G)\), the algebra is \(*\)-RFD.
\end{proof}

\subsection{The \texorpdfstring{\(C^*\)}{C*}-algebraic RFD theorem for ultragraphs}

We now pass to ultragraph \(C^*\)-algebras.  A finite algebraic branching
system may be regarded as an analytic branching system by equipping the
finite set with counting measure.  The Radon--Nikodym derivatives are then
identically \(1\), so the associated representation is a finite-dimensional
\(*\)-representation of \(C^*(\mathcal G)\).

\begin{theorem}\label{thm:Cstar-RFD-ultragraph}
Let \(\mathcal G\) be an ultragraph satisfying the graph-theoretic RFD
conditions.  Then \(C^*(\mathcal G)\) is residually finite-dimensional.
\end{theorem}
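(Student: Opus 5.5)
We will take the direct sum of all the finite branching-system representations constructed in Section~\ref{sec:finite-dimensional-branching-representations}, regarded as $*$-representations of $C^*(\mathcal G)$, and show that this sum is faithful by the Cuntz--Krieger uniqueness theorem for ultragraph $C^*$-algebras. Faithfulness of the sum then means that the finite-dimensional summands separate points, which is residual finite-dimensionality.

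\textbf{Step 1.} Each finite orbit system $X_\xi$ from Construction~\ref{finite branching system for terminal tails} and each cyclic system $X_{c,q}$ from Construction~\ref{con:ultra-finite-cyclic} is finite by Lemma~\ref{finiteorbit}. Equip it with counting measure. Then it is an analytic branching system whose Radon--Nikodym derivatives are identically $1$, so it induces a $*$-representation $\pi$ of $C^*(\mathcal G)$ on the finite-dimensional space $\ell^2(X)$. The operators $\pi(p_A)$ are diagonal projections and the $\pi(s_e)$ are partial permutations, and these satisfy Tomforde's relations.

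\textbf{Step 2.} Set
\[
\Pi=\bigoplus_\xi \pi_\xi\oplus\bigoplus_{c,q}\pi_{c,q},
\]
where $\xi$ ranges over all terminal tails and $(c,q)$ over all no-exit cycles and all $q\geq 1$.

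\textbf{Step 3.} Verify the hypotheses of the general Cuntz--Krieger uniqueness theorem for ultragraphs. In the version of Tomforde and of Gon\c{c}alves--Li--Royer, a representation is faithful if it satisfies two conditions:
\begin{enumerate}[(a)]
\item $\Pi(p_A)\neq 0$ for every nonempty $A\in\mathcal G^0$;
\item for every cycle $c$ without exits, $\Pi(s_c)$ restricted to the range of $\Pi(p_{s(c)})$ has full spectrum $\mathbb T$. Equivalently, the map $p(z)\mapsto \Pi(p(s_c))$ is injective on Laurent polynomials; this is the form we will check.
\end{enumerate}
Condition (a) is exactly Lemma~\ref{detecting vertices}.

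For (b), fix a nonzero Laurent polynomial $p$. Lemma~\ref{detecting laurent polynomials} gives $\pi_{c,q}(p(s_c))\neq 0$ once $q$ is large. Spectrally, $\pi_{c,q}(s_c)$ acts on $W_q$ as the $q$-cycle permutation, whose spectrum is the set of $q$-th roots of unity. The union of these sets over all $q$ is dense in $\mathbb T$, and the spectrum of a direct sum contains the closure of the union of the spectra of its summands, so the spectrum of $\Pi(s_c)p_{s(c)}$ is all of $\mathbb T$.

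\textbf{Step 4.} Conclude that $\Pi$ is faithful. Hence for every $0\neq a\in C^*(\mathcal G)$ some summand satisfies $\pi(a)\neq 0$, and each summand is finite-dimensional, so $C^*(\mathcal G)$ is RFD. Equivalently, $C^*(\mathcal G)$ embeds into a product of matrix algebras. Countability of the index set does not matter here, since RFD only requires a separating family.

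\textbf{Main obstacle.} The delicate point is matching the exact formulation of the uniqueness theorem being cited, specifically its condition on no-exit cycles. If that condition is phrased as a requirement on $\Pi(s_c)$ rather than on $\Pi(s_c)\Pi(p_{s(c)})$, we will restrict to the summands $W_q$, where $s_c$ acts unitarily. If instead the theorem requires an injective gauge action, we can supply one on the direct sum: the gauge-type grading already present in the cyclic systems, together with the full collection of $q$-th roots of unity, gives an approximate substitute for it. In that case we would pass to the gauge-invariant uniqueness theorem combined with the spectral argument above.
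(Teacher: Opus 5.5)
Your proposal is essentially the paper's proof. The paper also takes a direct sum of finite-orbit representations (one per vertex, which makes no difference compared with your sum over all terminal tails) together with the cyclic representations $\widetilde\pi_{c,q}$. It checks $\Pi(p_A)\neq 0$ via Lemma~\ref{detecting vertices}, and it gets $\mathbb T\subseteq\sigma(\Pi(s_c))$ from the $q$-th roots of unity and closedness of the spectrum. It then applies the Gon\c{c}alves--Li--Royer Cuntz--Krieger uniqueness theorem, whose hypotheses are exactly these two conditions in this spectral form, so your gauge-action fallback is not needed.

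One small correction: injectivity of $p\mapsto\Pi(p(s_c))$ on Laurent polynomials is not equivalent to full spectrum. It only forces the spectrum to be infinite; for example, a unitary with spectrum an arc is injective on Laurent polynomials. This does no harm, because you also verify the spectral condition directly.
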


\begin{proof}
For each vertex \(v\in G^0\), choose a finite-orbit branching-system
representation
\[
   \widetilde\pi_v:C^*(\mathcal G)\longrightarrow B(H_v)
\]
such that \(\widetilde\pi_v(p_v)\neq0\).  This is possible by
Lemma~\ref{detecting vertices}, applied to the generalized vertex
\(\{v\}\).  Moreover, if \(A\in\mathcal G^0\) is nonempty and \(v\in A\),
then the same representation satisfies \(\widetilde\pi_v(p_A)\neq0\).

For each cycle \(c\) without exits and each \(q\geq1\), let
\[
   \widetilde\pi_{c,q}:C^*(\mathcal G)\longrightarrow B(H_{c,q})
\]
be the finite-dimensional representation associated to the cyclic branching
system \(X_{c,q}\), where \(H_{c,q}=\ell^2(X_{c,q})\), see Lemma~\ref{detecting laurent polynomials}.

Let \(\Pi\) be the direct sum of all representations \(\widetilde\pi_v\),
with \(v\in G^0\), and all representations \(\widetilde\pi_{c,q}\), where
\(c\) ranges over the cycles without exits and \(q\geq1\).  We verify the
hypotheses of the general Cuntz--Krieger uniqueness theorem for ultragraph
\(C^*\)-algebras \cite[Theorem 7.4]{GoncalvesLiRoyer2016Ultragraph}.

First, if \(A\in\mathcal G^0\) is nonempty, choose \(v\in A\).  The summand
\(\widetilde\pi_v\) satisfies \(\widetilde\pi_v(p_A)\neq0\).  Hence
\(\Pi(p_A)\neq0\).

Second, let \(c=e_1\cdots e_m\) be a simple cycle without exits, based at
\(v=s(e_1)\).  For each \(q\geq 1\), the construction of
\(\widetilde\pi_{c,q}\) ensures that, on the subspace
\[
\ell^2(\{(\xi_c,j):j\in\Z/q\Z\})\subseteq \widetilde\pi_{c,q}(p_v)H_{c,q},
\]
the operator \(\widetilde\pi_{c,q}(s_c)\) is the cyclic permutation of order
\(q\).  Thus the spectrum of \(\widetilde\pi_{c,q}(s_c)\) contains the set
of \(q\)-th roots of unity.  Since all these representations occur as direct
summands of \(\Pi\), the spectrum of \(\Pi(s_c)\) contains the \(q\)-th
roots of unity for every \(q\geq 1\).  Therefore \(\sigma(\Pi(s_c))\)
contains all roots of unity.  Since the spectrum is closed, it follows that
\(\mathbb T\subseteq\sigma(\Pi(s_c))\).

The two hypotheses of the ultragraph version of Szyma\'nski's theorem, \cite{GoncalvesLiRoyer2016Ultragraph} (see \cite{Szymanski2002} for the graph case), are
therefore satisfied.  Hence \(\Pi\) is faithful.  Since \(\Pi\) is a direct
sum of finite-dimensional representations, the finite-dimensional summands
separate the points of \(C^*(\mathcal G)\). Thus \(C^*(\mathcal G)\) is
RFD.
\end{proof}

\section{Converse results}\label{conversesec}

\subsection{The analytic converse}

In this subsection we prove an analytic converse to the RFD theorem for
ultragraph \(C^*\)-algebras satisfying Condition~\emph{(RFUM2)}.  This
condition was introduced in \cite{TascaGoncalves2022KMSUltragraphSinks} as a
generalization of Condition~\emph{(RFUM)} from
\cite{GoncalvesRoyer2019InfiniteAlphabetEdgeShift}.  The additional
RFUM2 hypothesis allows us to use the topology and groupoid model of the
boundary ultrapath space.

Let \(X_{\mathcal G}\) be the boundary ultrapath space from
Section~\ref{orbits in ultragraphs}.  For ultragraphs satisfying
Condition~\emph{(RFUM2)}, the space \(X_{\mathcal G}\) is locally compact
Hausdorff and has a basis of compact-open cylinders
\cite{TascaGoncalves2022KMSUltragraphSinks}.  Moreover, the shift on
\(X_{\mathcal G}\) gives rise to the Deaconu--Renault groupoid
\[
   \mathfrak G_{\mathcal G}
   =
   \{(x,m-n,y):x,y\in X_{\mathcal G},\ m,n\in\mathbb N_0,\ 
   \sigma^m(x)=\sigma^n(y)\},
\]
with the usual convention that the iterates are taken on their natural
domains.  The unit space of \(\mathfrak G_{\mathcal G}\) is identified with
\(X_{\mathcal G}\), and the associated groupoid \(C^*\)-algebra is
isomorphic to \(C^*(\mathcal G)\).

For \(x\in X_{\mathcal G}\), its groupoid orbit is
\[
   \mathcal O(x)
   =
   \{y\in X_{\mathcal G}:\text{ there exist }m,n\in\mathbb N_0
   \text{ such that } \sigma^m(y)=\sigma^n(x)\}.
\]
Thus \(y\in\mathcal O(x)\) precisely when \(x\) and \(y\) have a common
forward shift; equivalently, after deleting finitely many initial edges from
each, the same boundary ultrapath is obtained.

Following \cite{ShulmanSkalski}, we use the following terminology.

\begin{definition}
A point \(x\in X_{\mathcal G}\) is called \emph{periodic} if its orbit
\(\mathcal O(x)\) in \(\mathfrak G_{\mathcal G}^{(0)}\) is finite.
\end{definition}

We shall use the following consequence of \cite{ShulmanSkalski}: if the
\(C^*\)-algebra of an amenable \'etale groupoid is RFD, then its unit space
has a dense set of periodic points.  Applied to the groupoid
\(\mathfrak G_{\mathcal G}\), this gives the implication
\[
   C^*(\mathcal G)\text{ is RFD}
   \quad\Longrightarrow\quad
   \text{the periodic points are dense in }X_{\mathcal G}.
\]

\begin{theorem}[Converse for RFUM2 ultragraphs]\label{analiticconverse}
Let \(\mathcal G\) be an ultragraph satisfying Condition~\emph{(RFUM2)}.  If
\(C^*(\mathcal G)\) is RFD, then \(\mathcal G\) satisfies the
graph-theoretic RFD conditions.
\end{theorem}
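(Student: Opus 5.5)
The plan is to argue by contraposition, using the density of periodic points. Since the Deaconu--Renault groupoid \(\mathfrak G_{\mathcal G}\) is amenable, the cited consequence of \cite{ShulmanSkalski} says that RFD of \(C^*(\mathcal G)\) forces the periodic points (points with finite tail orbit) to be dense in \(X_{\mathcal G}\). So it suffices to show the following: if one of (B1)--(B4) fails, there is a nonempty open set, which we take to be a compact-open cylinder \(Z(\beta)\) or \(Z(\beta,A)\) of the RFUM2 topology, in which every point has infinite orbit. Two structural facts will be used throughout. First, every point of \(Z(\beta)\) has the form \(\beta y\). Second, the orbit of \(x\) contains all points \(\gamma\sigma^n(x)\) with \(\gamma\) any path whose range contains the source of \(\sigma^n(x)\).

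\textbf{(B1).} Suppose \(v\) is an infinite receiver. Pick any point \(x\) with \(s_X(x)\) a vertex in a set containing \(v\); under RFUM2, cylinders at \(v\) are nonempty, and such a point exists if \(v\) is regular, a sink, or an infinite emitter. Then for the infinitely many edges \(e\) with \(v\in r(e)\), the points \(ex\) are distinct and lie in \(\mathcal O(x)\). Hence every point of the open set \(D_v\) (the cylinder at \(v\)) has infinite orbit. If \(\{v\}\) is not itself a generalized vertex appearing as a source, I would instead use the cylinder \(Z(e_0)\) for one fixed edge \(e_0\) with \(v\in r(e_0)\), restricted to points passing through \(v\). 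This needs care, and I would handle it via the RFUM2 decomposition of \(r(e_0)\) into minimal pieces.

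\textbf{(B2).} Suppose the cycle \(c\) has an exit. Consider the cylinder \(Z(c)\) based at \(s(c)\). A point \(cy\) either equals \(c^\infty\), or it eventually leaves the cycle through an exit. In the second case it is \(c^k\gamma z\), and then the points \(c^{j}\gamma z\) for \(j\ge0\) are all distinct and lie in the same orbit. For \(c^\infty\) itself I cannot argue the same way, since its orbit may be finite, so instead I would use a smaller cylinder \(Z(c^k f)\), where \(f\) is the exit edge. If the exit is a sink \(w\), I would use \(Z(c^k e_i,\{w\})\). Every point of that cylinder has infinite orbit by the \(c^j\) prefixing argument.

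\textbf{(B3).} Suppose there is an infinite backward chain \(\cdots e_2e_1\). Take the cylinder \(Z(e_1)\). Each point \(e_1y\) has orbit containing \(e_n\cdots e_1y\) for all \(n\). These points are pairwise distinct because the edges \(e_n\) are distinct, so the words have different lengths and different first letters.

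\textbf{(B4).} Suppose a vertex \(v\) reaches neither a terminal boundary set nor a cycle. I claim that every point with source in the cylinder at \(v\) has infinite orbit. A finite boundary path from \(v\) would end in some \(A\in\mathcal A\), which is excluded. So every point is an infinite path from \(v\) that never meets a vertex on a cycle. Such a path visits infinitely many distinct vertices without repetition, and therefore its shifts \(\sigma^n(x)\) are pairwise distinct and lie in its orbit. The cylinder is nonempty: under RFUM2, \(v\) is not a sink, and it must emit edges leading to infinite paths. I expect the main obstacle to be this last point, together with verifying in each case that the chosen cylinder is genuinely a nonempty open set in the RFUM2 topology of \cite{TascaGoncalves2022KMSUltragraphSinks}. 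The infinite-receiver case with \(v\) sitting inside a larger minimal range set is the most delicate, and I would handle it by choosing cylinders \(Z(\alpha,A)\) or \(Z(\alpha)\) compatible with the RFUM2 decomposition.
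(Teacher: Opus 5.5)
Your proposal is correct and follows essentially the same route as the paper. Both argue by contraposition via the Shulman--Skalski density of periodic points, and use the same open sets: the vertex set $X_v=\{x:s_X(x)\preceq\{v\}\}$ for (B1) and (B4), a cylinder through the exit (or the isolated point $(e_1\cdots e_i,\{w\})$ for a sink exit) for (B2), and a cylinder at the first chain edge for (B3). Infinite orbits come from prepending edges, prepending powers of the cycle, or shifting.

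The extra caveats you raise in (B1) are unnecessary. Every vertex is a sink, a regular vertex, or an infinite emitter, and in each case RFUM2 makes $X_v$ a nonempty open set, which is all the paper uses. Your one-step extension argument for nonemptiness in (B4) is actually more explicit than the paper's.
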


\begin{proof}
We prove the contrapositive, showing that the failure of any one of the four
graph-theoretic RFD conditions produces a nonempty open subset of
\(X_{\mathcal G}\) containing no periodic points.

First suppose that \(\mathcal G\) has an infinite receiver.  Thus there is a
vertex \(v\in G^0\) and infinitely many distinct edges
\(e_1,e_2,\ldots\) such that \(v\in r(e_i)\) for every \(i\).  Let
\[
   X_v:=\{x\in X_{\mathcal G}:s_X(x)\preceq\{v\}\}.
\]
By the RFUM2 topology, \(X_v\) is a nonempty open subset of
\(X_{\mathcal G}\).  If \(x\in X_v\), then \(e_i x\) is defined for every
\(i\), and all the points \(e_i x\) belong to the groupoid orbit of \(x\).
Since the first edges are distinct, these points are distinct.  Hence every
point of \(X_v\) has infinite orbit, so \(X_v\) contains no periodic points.
This contradicts density of periodic points.  Therefore no vertex can be an
infinite receiver.

Next suppose that \(c=e_1\cdots e_n\) is a cycle with an exit.  We consider
the two possible types of exits.  First assume that the exit is an edge
\(f\neq e_{i+1}\), with \(s(f)\in r(e_i)\).  Put
\(\rho=e_1\cdots e_i f\).  The cylinder
\[
   D_{(\rho,r(f))}
   =
   \{x\in X_{\mathcal G}:x=\rho y \text{ and } s_X(y)\preceq r(f)\}
\]
is a nonempty open subset of \(X_{\mathcal G}\).  If
\(x\in D_{(\rho,r(f))}\), then, for each \(k\geq1\), the point \(c^k x\) is
defined and belongs to the orbit of \(x\).  These points are all distinct,
and hence every point of \(D_{(\rho,r(f))}\) has infinite orbit.  Thus this
open set contains no periodic points.

Now assume that the exit is a sink \(w\in r(e_i)\).  Let
\(\rho=e_1\cdots e_i\).  The finite boundary ultrapath \((\rho,\{w\})\) is
an isolated point of \(X_{\mathcal G}\), so \(\{(\rho,\{w\})\}\) is open.
But the points \((c^k\rho,\{w\})\), \(k\geq1\), are all defined, distinct,
and belong to the same orbit.  Hence \((\rho,\{w\})\) is not periodic, and
again we have a nonempty open set with no periodic points.  Therefore no
cycle can have an exit.

Now suppose that \(\mathcal G\) has an infinite backward chain
\(e_0 e_1 e_2\ldots\), with \(s(e_i)\in r(e_{i+1})\) for every \(i\geq0\).
Consider the nonempty cylinder \(D_{(e_0,r(e_0))}\).  If
\(x\in D_{(e_0,r(e_0))}\), then \(e_k e_{k-1}\cdots e_1x\) is defined for
every \(k\geq1\).  These points are distinct and all belong to the groupoid
orbit of \(x\).  Hence every point of \(D_{(e_0,r(e_0))}\) has infinite
orbit.  This gives a nonempty open set with no periodic points,
contradicting density.

Finally suppose that there is a vertex \(v\in G^0\) which reaches neither a
terminal boundary set nor a cycle.  Consider again the nonempty open set
\(X_v=\{x\in X_{\mathcal G}:s_X(x)\preceq\{v\}\}\).  Let \(x\in X_v\).
Since \(v\) does not reach a terminal boundary set, \(x\) cannot be a finite
boundary ultrapath.  Thus \(x\) is an infinite path.  If \(x\) were
eventually periodic, then some vertex reached along \(x\) would lie on a
cycle, and hence \(v\) would reach a cycle, contrary to assumption.  Hence
\(x\) is not eventually periodic.  Therefore the shifts
\(x,\sigma(x),\sigma^2(x),\ldots\) are infinitely many distinct points in the
groupoid orbit of \(x\).  Thus \(x\) is not periodic, and \(X_v\) is a
nonempty open set with no periodic points.

In each case, the failure of one of the graph-theoretic RFD conditions
contradicts the density of periodic points.  Therefore all four
graph-theoretic RFD conditions hold.
\end{proof}

\subsection{The algebraic converse}

We now prove the converse in the algebraic setting.  Throughout this
subsection, \(K\) is an arbitrary field and
\(\mathcal G=(G^0,\mathcal G^1,r,s)\) is an ultragraph satisfying
Condition~\emph{(RFUM2)}.  If $
   \pi:L_K(\mathcal G)\longrightarrow \End_K(V)
$
is a finite-dimensional representation, we write
\[
   P_A:=\pi(p_A),\qquad S_e:=\pi(s_e),\qquad S_e^*:=\pi(s_e^*).
\]
For \(A\in\mathcal G^0\), set \(V_A:=P_A(V)\).

We shall repeatedly use the following elementary facts.  If \(A\subseteq B\),
then \(V_A\subseteq V_B\).  If \(A\cap B=\emptyset\), then
\(V_A\cap V_B=\{0\}\).  For every edge \(e\), the map \(S_e\) is injective
on \(V_{r(e)}\), because \(S_e^*S_e=P_{r(e)}\).  More generally, if
\(\alpha=e_1\cdots e_n\) is a finite path, then \(S_\alpha\) is injective
on \(V_{r(\alpha)}\), since \(S_\alpha^*S_\alpha=P_{r(\alpha)}\).  We also
use that \(p_A\neq0\) in \(L_K(\mathcal G)\) whenever
\(\emptyset\neq A\in\mathcal G^0\).

\begin{proposition}\label{prop:algebraic-converse-RFUM2}
Let \(\mathcal G\) be an ultragraph satisfying Condition~\emph{(RFUM2)} and let $K$ be a field.  If
\(L_K(\mathcal G)\) is RFD, then \(\mathcal G\) satisfies the
graph-theoretic RFD conditions.
\end{proposition}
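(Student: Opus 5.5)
We prove the contrapositive, one condition at a time. Each time, we use the finite-dimensional linear algebra recorded above. Concretely, we take a nonempty \(A\in\mathcal G^0\) (so \(p_A\neq0\)) and show that \(P_A=0\) in every finite-dimensional representation \(\pi\), contradicting RFD. The recurring tool is a dimension count: if \(\alpha_1,\alpha_2,\ldots\) are paths with pairwise distinct first edges (or, more generally, pairwise incomparable paths), then the ranges \(S_{\alpha_i}S_{\alpha_i}^*V\) are independent. This follows from \(s_{\alpha_i}^*s_{\alpha_j}=0\): the operators \(S_{\alpha_i}S_{\alpha_i}^*\) are mutually orthogonal idempotents. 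Each \(S_{\alpha_i}\) is injective on \(V_{r(\alpha_i)}\), so \(\dim V\geq\sum_i\dim V_{r(\alpha_i)}\).

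\textbf{(B1).} Suppose \(v\) lies in \(r(e_i)\) for infinitely many distinct edges \(e_i\). Then \(V_v\subseteq V_{r(e_i)}\), so \(\dim V\geq \sum_i \dim V_v\). Hence \(V_v=0\), contradicting \(p_v\neq0\).

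\textbf{(B3).} Take an infinite backward chain \(\cdots e_2e_1\) and set \(\alpha_k=e_k\cdots e_1\). We want these to give independent ranges: for \(k<l\), \(\alpha_k\) and \(\alpha_l\) should have distinct first edges or be incomparable. Since the edges are distinct, \(\alpha_k\) is not a prefix of \(\alpha_l\) unless \(e_k=e_l\), so \(s_{\alpha_k}^*s_{\alpha_l}=0\). Each \(S_{\alpha_k}\) is injective on \(V_{r(e_1)}\), so \(\dim V\geq\sum_k\dim V_{r(e_1)}\), forcing \(P_{r(e_1)}=0\).

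\textbf{(B2).} Let \(c=e_1\cdots e_n\) be a cycle with an exit.
\begin{itemize}
\item If the exit is an edge \(f\neq e_{i+1}\) with \(s(f)\in r(e_i)\), put \(\rho=e_1\cdots e_if\). The paths \(c^k\rho\), \(k\geq0\), are pairwise incomparable, since they diverge at the copy of \(f\) versus \(e_{i+1}\). Each \(S_{c^k\rho}\) is injective on \(V_{r(f)}\), so \(P_{r(f)}=0\).
\item If the exit is a sink \(w\in r(e_i)\), use \(\rho=e_1\cdots e_i\) and \(V_w\). The elements \(s_{c^k\rho}p_w\) have ranges inside \(p_{s(e_1)}\). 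We need the relation \(s_{e_{i+1}}^*p_w=s_{e_{i+1}}^*p_{s(e_{i+1})}p_w=0\), since \(w\neq s(e_{i+1})\). This gives \((s_{c^k\rho}p_w)^*(s_{c^l\rho}p_w)=0\) for \(k\neq l\), so again the ranges are independent and \(V_w=0\).
\end{itemize}

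\textbf{(B4), the main obstacle.} Assume (B1)–(B3) hold and a vertex \(v\) reaches neither a terminal boundary set nor a cycle. We argue \(V_v=0\). Every vertex \(u\) reachable from \(v\) is regular, since a sink or a vertex infinite emitter gives \(\{u\}\in\mathcal A\). By (RFUM2), each range reachable from \(v\) is a finite union of singletons of regular vertices; any minimal sink or minimal infinite emitter inside \(r(\alpha)\) would be a terminal boundary set. Thus near \(v\) the ultragraph behaves like an ordinary row-finite graph with no cycles reachable from \(v\), and \(V_v=\bigoplus_{s(e)=v}S_eV_{r(e)}\).

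Iterating this, \(V_v=\bigoplus_{|\alpha|=N,\,s(\alpha)=v}S_\alpha V_{r(\alpha)}\) for every \(N\). The paths from \(v\) never revisit a vertex, since there are no cycles. Suppose \(V_v\neq0\). The set of paths \(\alpha\) from \(v\) with \(V_{r(\alpha)}\neq0\) is then an infinite tree, finitely branching by regularity. By König's lemma it has an infinite branch. Along that branch, the vertices \(u_N\) have \(V_{u_N}\neq0\) and are pairwise distinct.

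This alone is not a contradiction, so a dimension count is needed instead. For fixed \(N\), the subspaces \(S_\alpha V_{r(\alpha)}\) with \(|\alpha|=N\) and \(V_{r(\alpha)}\neq0\) are independent and each at least one-dimensional, so there are at most \(\dim V\) of them. The delicate point is to compare across different levels \(N\). Here the plan is to use (B1) and (B3) to bound how many paths enter each vertex. Since \(\sum_{|\alpha|=N}\dim V_{r(\alpha)}=\dim V_v\), the quantity \(\dim V_{u_N}\) is nonincreasing along the branch, hence eventually constant, and then \(S_{e}\) restricted to \(V_{r(e)}\) is an isomorphism onto \(V_{u_N}\).

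The intended step is then to follow this stabilized branch backward. Vertices on the branch are distinct and there are no infinite backward chains, so one should produce infinitely many distinct vertices \(u_N\) whose subspaces \(V_{u_N}\) are independent inside \(V\) (they are orthogonal idempotent images since the \(u_N\) are distinct vertices). All of them have dimension equal to the stable value \(d\geq1\), which would give \(\dim V=\infty\), a contradiction.

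The crux I am least sure of is exactly this independence of the \(V_{u_N}\). For distinct vertices, \(p_{u}p_{u'}=0\) gives independence of the finitely many \(V_{u_N}\) considered at once. So it suffices to show that infinitely many distinct \(u_N\) have \(V_{u_N}\neq0\), which follows from König's lemma above. Independence then yields \(\dim V\geq\) any finite number, completing the contradiction.
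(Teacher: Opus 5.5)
Your proof is correct. For (B1), (B3) and (B4) it is essentially the paper's argument: a nonzero $p_A$ must vanish in every finite-dimensional representation, because otherwise $V$ would contain arbitrarily many independent injective copies of $V_A$, or nonzero subspaces $V_{u_N}$ for pairwise distinct vertices.

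The genuine difference is in (B2), and here the two routes compare as follows.
\begin{itemize}
\item \textbf{Your route.} You handle a cycle with an exit exactly like (B1) and (B3). The pairwise incomparable paths $c^k\rho$ give infinitely many independent injective copies of $V_{r(f)}$. In the sink case, the elements $s_{c^k\rho}p_w$, together with $p_w s_{e_{i+1}}=0$, do the same for $V_w$. This is the algebraic shadow of the orbit points $c^kx$ in the paper's analytic converse. It reduces all three cases to a single ``no infinite orthogonal family of copies'' mechanism, and it needs no case split on whether $s(f)$ equals the base vertex.
\item \textbf{The paper's route.} It instead uses that an injective linear self-map of a finite-dimensional space is onto. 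After relabeling so the exit sits at $e_n$, $S_d$ maps $V_{r(e_n)}$ injectively into $V_{u_1}\subseteq V_{r(e_n)}$, hence $S_{e_1}(V_{r(e_1)})=V_{u_1}$. So the cycle edge already fills $V_{u_1}$, leaving no room for a sink $w\in r(e_n)$ or for an exit edge $f$, which is forced to satisfy $S_f=0$. This gives slightly more structural information about finite-dimensional representations, at the cost of the extra case analysis $w\neq u_1$ versus $w=u_1$.
\end{itemize}

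In (B4), your sentence ``This alone is not a contradiction'' is wrong. The whole stabilization and backward-following detour, including the appeal to (B3), should be deleted. Pairwise distinct vertices $u_0,u_1,\ldots$ with $V_{u_N}\neq0$ already contradict $\dim V<\infty$, since $p_up_{u'}=0$ for $u\neq u'$. That is exactly what your final paragraph and the paper do.

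K\"onig's lemma is also unnecessary. Every vertex reachable from $v$ is regular, so the Cuntz--Krieger relation at $u_N$ with $V_{u_N}\neq0$ directly gives an edge $e$ with $V_{r(e)}\neq0$. By (RFUM2), some $u_{N+1}\in r(e)$ then has $V_{u_{N+1}}\neq0$, so the sequence can be built greedily.
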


\begin{proof}
We prove the contrapositive.  In each case, we find a nonzero projection
\(p_A\in L_K(\mathcal G)\) that is killed by every finite-dimensional
representation.  This prevents \(L_K(\mathcal G)\) from being RFD.

First suppose that \(\mathcal G\) has an infinite receiver.  Thus there is a
vertex \(v\in G^0\) and infinitely many distinct edges \(e_1,e_2,\ldots\)
such that \(v\in r(e_i)\) for every \(i\).  Let
\(\pi:L_K(\mathcal G)\to\End_K(V)\) be a finite-dimensional representation.

For each \(i\), we have \(V_v\subseteq V_{r(e_i)}\), and hence \(S_{e_i}\)
is injective on \(V_v\).  Moreover, for every \(N\geq1\), the sum
\[
   S_{e_1}(V_v)+\cdots+S_{e_N}(V_v)
\]
is direct.  Indeed, if \(\sum_{i=1}^N S_{e_i}(\xi_i)=0\), with
\(\xi_i\in V_v\), then applying \(S_{e_j}^*\) gives
\[
   0=S_{e_j}^*\sum_{i=1}^N S_{e_i}(\xi_i)
    =P_{r(e_j)}(\xi_j)
    =\xi_j.
\]
Thus each \(\xi_j\) is zero, and the sum is direct.

Therefore \(V\) contains, for every \(N\), a direct sum of \(N\) copies of
\(V_v\).  Since \(V\) is finite-dimensional, this is possible only if
\(V_v=0\).  Hence \(\pi(p_v)=0\) for every finite-dimensional representation
\(\pi\).  Since \(p_v\neq0\) in \(L_K(\mathcal G)\), the algebra
\(L_K(\mathcal G)\) is not RFD.

Next suppose that \(\mathcal G\) has an infinite backward chain.  Thus there
are distinct edges \(e_0,e_1,e_2,\ldots\) such that
\(s(e_i)\in r(e_{i+1})\) for all \(i\geq0\).  For \(n\geq0\), put $
   \alpha_n=e_n e_{n-1}\cdots e_0.$
Then \(\alpha_n\) is a path and \(r(\alpha_n)=r(e_0)\).  For each \(n\),
the map \(S_{\alpha_n}\) is injective on \(V_{r(e_0)}\).  Moreover, for each
\(N\geq0\), the sum
\[
   S_{\alpha_0}(V_{r(e_0)})+\cdots+S_{\alpha_N}(V_{r(e_0)})
\]
is direct.  Indeed, if
\(\sum_{n=0}^N S_{\alpha_n}(\xi_n)=0\), with
\(\xi_n\in V_{r(e_0)}\), then applying \(S_{\alpha_j}^*\) gives
\(\xi_j=0\).  This follows from
\(S_{\alpha_j}^*S_{\alpha_n}=0\) for \(j\neq n\), because the paths
\(\alpha_j\) and \(\alpha_n\) have distinct first edges, and from
\(S_{\alpha_j}^*S_{\alpha_j}=P_{r(e_0)}\).  Thus \(V\) contains arbitrarily
large direct sums of copies of \(V_{r(e_0)}\), and finite dimensionality
forces \(V_{r(e_0)}=0\).  Therefore every finite-dimensional representation
kills \(p_{r(e_0)}\).  Since \(r(e_0)\neq\emptyset\), the projection
\(p_{r(e_0)}\) is nonzero in \(L_K(\mathcal G)\).  Hence
\(L_K(\mathcal G)\) is not RFD.

Now suppose that \(\mathcal G\) has a cycle with an exit.  By cyclically
relabeling the cycle, we may write it as \(d=e_1\cdots e_n\), with an exit
at the last edge \(e_n\).  Thus either there is an edge \(f\neq e_1\) with
\(s(f)\in r(e_n)\), or there is a sink \(w\in r(e_n)\).  Put
\(u_i=s(e_i)\), so that \(u_1=s(e_1)\in r(e_n)\).

The path \(d=e_1\cdots e_n\) starts at \(u_1\) and has range \(r(e_n)\).
Since \(P_{u_1}S_d=S_d\), we have \(S_d(V)\subseteq V_{u_1}\), and in
particular $ S_d(V_{r(e_n)})\subseteq V_{u_1}.$
Also \(V_{u_1}\subseteq V_{r(e_n)}\), because \(u_1\in r(e_n)\).  Hence $
   S_d(V_{r(e_n)})\subseteq V_{u_1}\subseteq V_{r(e_n)}.$
Since \(S_d\) is injective on \(V_{r(e_n)}\), the spaces
\(S_d(V_{r(e_n)})\) and \(V_{r(e_n)}\) have the same dimension.  It follows
that $
   S_d(V_{r(e_n)})=V_{u_1}=V_{r(e_n)}.$

On the other hand, since \(d\) begins with \(e_1\), we have $
   S_d(V_{r(e_n)})\subseteq S_{e_1}(V_{r(e_1)}),$
while \(S_{e_1}(V_{r(e_1)})\subseteq V_{u_1}\).  Therefore $
   S_{e_1}(V_{r(e_1)})=V_{u_1}.$
Thus, in any finite-dimensional representation, the cycle edge \(e_1\)
already fills the whole space \(V_{u_1}\).

We now use the exit.  If the exit is a sink \(w\in r(e_n)\), then
\(w\neq u_1\), since \(u_1\) emits \(e_1\).  Hence
\(V_w\subseteq V_{r(e_n)}=V_{u_1}\), but
\(V_w\cap V_{u_1}=\{0\}\).  Therefore \(V_w=\{0\}\).  Thus every
finite-dimensional representation kills \(p_w\), while \(p_w\neq0\).

If the exit is an edge \(f\neq e_1\) with \(s(f)\in r(e_n)\), put
\(w=s(f)\).  If \(w\neq u_1\), then the same argument as above gives
\(V_w=\{0\}\).  Consequently \(S_f=P_wS_f=0\), and hence
\(P_{r(f)}=S_f^*S_f=0\).
It remains to consider the case \(w=u_1\).  Then
\(S_f(V_{r(f)})\subseteq V_{u_1}\).  But we have just shown that
\(V_{u_1}=S_{e_1}(V_{r(e_1)})\).  Since \(f\neq e_1\), the ultragraph
relations give \(S_{e_1}^*S_f=0\).  Hence
\[
   S_{e_1}(V_{r(e_1)})\cap S_f(V_{r(f)})=\{0\}.
\]
Thus \(S_f(V_{r(f)})\) is a subspace of \(V_{u_1}\) with zero intersection
with \(V_{u_1}\).  Hence \(S_f(V_{r(f)})=0\), so \(S_f=0\), and again
\(P_{r(f)}=S_f^*S_f=0\).

Therefore, in all cases, some nonzero projection, either \(p_w\) or
\(p_{r(f)}\), is killed by every finite-dimensional representation.  Hence
\(L_K(\mathcal G)\) is not RFD.

Finally suppose that there exists a vertex \(v\in G^0\) which reaches
neither a terminal boundary set nor a cycle.  We show that every
finite-dimensional representation kills \(p_v\).  Suppose, for a
contradiction, that \(V_v\neq0\).  Since \(v\) does not reach a terminal
boundary set, \(v\) is neither a sink nor an infinite emitter.  Thus \(v\) is
regular.  By the Cuntz--Krieger relation,
\[
   P_v=\sum_{s(e)=v}S_eS_e^*.
\]
Since \(V_v\neq0\), there exists an edge \(e_1\) with \(s(e_1)=v\) such that
\(S_{e_1}S_{e_1}^*(V)\neq0\).  Hence \(V_{r(e_1)}\neq0\).

Because \(\mathcal G\) satisfies Condition~\emph{(RFUM2)}, the range
\(r(e_1)\) is a finite union of minimal infinite emitters, minimal sinks,
singleton sinks, and singleton regular vertices.  Since \(v\) reaches no
terminal boundary set, \(r(e_1)\) cannot contain a minimal infinite emitter,
a minimal sink, or a singleton sink.  Hence, in the present situation,
\(r(e_1)\) is a finite union of singleton regular vertices.  Since
\(V_{r(e_1)}\neq0\), at least one of these vertices, say \(v_1\), satisfies
\(V_{v_1}\neq0\).

Repeating the argument, we obtain an infinite sequence of vertices
\(v=v_0,v_1,v_2,\ldots\), with \(V_{v_i}\neq0\) for all \(i\), and such that
\(v_i\) reaches \(v_{i+1}\) by one edge.  Since \(v\) reaches no cycle, the
vertices \(v_i\) are pairwise distinct; otherwise a repeated vertex would
produce a cycle reachable from \(v\).  Therefore, for every \(N\), the sum
\[
   V_{v_0}+\cdots+V_{v_N}
\]
is direct, because the projections \(P_{v_i}\) are pairwise orthogonal.
Thus \(V\) contains arbitrarily large direct sums of nonzero subspaces,
contradicting finite dimensionality.  Hence \(V_v=0\).  Therefore every
finite-dimensional representation kills \(p_v\), while \(p_v\neq0\) in
\(L_K(\mathcal G)\).  So \(L_K(\mathcal G)\) is not RFD.
\end{proof}

\subsection{Equivalence for RFUM2 ultragraphs and graphs}

We finish this section by recording the equivalence between the algebraic
and \(C^*\)-algebraic residual finite-dimensionality results in the RFUM2
setting.  

\begin{theorem}[Equivalence for RFUM2 ultragraphs]
\label{thm:RFUM2-equivalence-algebraic-Cstar-RFD}
Let \(\mathcal G\) be an ultragraph satisfying Condition~\emph{(RFUM2)}, and
let \(K\) be a field.  The following are equivalent:
\begin{enumerate}[(i)]
\item \(C^*(\mathcal G)\) is RFD;
\item \(L_K(\mathcal G)\) is RFD;
\item \(\mathcal G\) satisfies the graph-theoretic RFD conditions.
\end{enumerate}
\end{theorem}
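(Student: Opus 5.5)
The equivalence follows by assembling the four implications already established, with condition (iii) as the hub. We prove (iii)\(\Rightarrow\)(i) and (iii)\(\Rightarrow\)(ii) using the positive results of Section~\ref{sec:RFD-ultragraph-algebras}. We prove (i)\(\Rightarrow\)(iii) and (ii)\(\Rightarrow\)(iii) using the converses of Section~\ref{conversesec}. No direct comparison between \(C^*(\mathcal G)\) and \(L_K(\mathcal G)\) is needed; the combinatorial conditions mediate between them.

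The implications run as follows.
\begin{enumerate}[(a)]
\item \emph{(iii)\(\Rightarrow\)(ii).} Assume \(\mathcal G\) satisfies the graph-theoretic RFD conditions. Theorem~\ref{thm:algebraic-RFD-ultragraph} gives that \(L_K(\mathcal G)\) is RFD for the given field \(K\). That theorem requires no RFUM2 hypothesis, so it applies verbatim.
\item \emph{(iii)\(\Rightarrow\)(i).} Under the same hypothesis, Theorem~\ref{thm:Cstar-RFD-ultragraph} gives that \(C^*(\mathcal G)\) is RFD.
\item \emph{(i)\(\Rightarrow\)(iii).} If \(C^*(\mathcal G)\) is RFD, Theorem~\ref{analiticconverse} gives (iii). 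Here the RFUM2 hypothesis is essential, because it supplies the locally compact Hausdorff topology on \(X_{\mathcal G}\) and the groupoid model. These are needed to apply the density-of-periodic-points criterion of \cite{ShulmanSkalski}.
\item \emph{(ii)\(\Rightarrow\)(iii).} If \(L_K(\mathcal G)\) is RFD, Proposition~\ref{prop:algebraic-converse-RFUM2} gives (iii).
\end{enumerate}

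Chaining these shows that (i) and (ii) are each equivalent to (iii), hence to each other.

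There is essentially no obstacle, since all the work was done in the cited results. The one point to check is that the field \(K\) in (ii) is arbitrary but fixed. Both Theorem~\ref{thm:algebraic-RFD-ultragraph} and Proposition~\ref{prop:algebraic-converse-RFUM2} hold for every field, so the equivalence holds for each \($K\) separately. As a byproduct, whether \(L_K(\mathcal G)\) is RFD does not depend on \(K\).
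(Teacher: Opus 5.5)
Your proposal is correct and is exactly the paper's argument: the paper also routes everything through (iii), taking (iii)$\Rightarrow$(ii) and (iii)$\Rightarrow$(i) from Theorems~\ref{thm:algebraic-RFD-ultragraph} and~\ref{thm:Cstar-RFD-ultragraph}, and (i)$\Rightarrow$(iii) and (ii)$\Rightarrow$(iii) from Theorem~\ref{analiticconverse} and Proposition~\ref{prop:algebraic-converse-RFUM2}. Your closing remark, that for RFUM2 ultragraphs residual finite-dimensionality of \(L_K(\mathcal G)\) does not depend on the field \(K\), is a correct consequence.
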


\begin{proof}
The implications \((iii)\Rightarrow(ii)\) and \((iii)\Rightarrow(i)\) are
Theorems~\ref{thm:algebraic-RFD-ultragraph} and
\ref{thm:Cstar-RFD-ultragraph}, respectively.  Conversely,
\((i)\Rightarrow(iii)\) follows from Theorem~\ref{analiticconverse}, and
\((ii)\Rightarrow(iii)\) follows from
Proposition~\ref{prop:algebraic-converse-RFUM2}.  Hence the three conditions
are equivalent.
\end{proof}

Since every graph may be regarded as an ultragraph satisfying Condition~\emph{(RFUM2)} and the graph-theoretic RFD conditions for
ultragraphs reduce exactly to Bellier's graph conditions, we obtain the following consequence.

\begin{corollary}[The graph case]
\label{cor:graph-equivalence-algebraic-Cstar-RFD}
Let \(E\) be a graph and let \(K\) be a field.  Then the following are
equivalent:
\begin{enumerate}[(i)]
\item \(C^*(E)\) is RFD;
\item \(L_K(E)\) is RFD;
\item \(E\) satisfies Bellier's graph conditions.
\end{enumerate}
\end{corollary}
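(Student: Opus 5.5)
The plan is to deduce the corollary directly from Theorem~\ref{thm:RFUM2-equivalence-algebraic-Cstar-RFD}. The work reduces to two verifications: that a graph, viewed as an ultragraph, satisfies Condition~\emph{(RFUM2)}, and that the graph-theoretic RFD conditions (B1)--(B4) for this ultragraph say exactly what Bellier's conditions say for \(E\).

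Given \(E=(E^0,E^1,r_E,s_E)\), define the ultragraph \(\mathcal G_E\) by \(G^0=E^0\), \(\mathcal G^1=E^1\), \(s=s_E\) and \(r(e)=\{r_E(e)\}\). Then \(\mathcal G^0\) consists of the finite subsets of \(E^0\). The relations (L1)--(L4) restricted to singletons are the Leavitt path algebra relations, and \(p_A=\sum_{v\in A}p_v\) for finite \(A\). Hence \(L_K(\mathcal G_E)\cong L_K(E)\) and \(C^*(\mathcal G_E)\cong C^*(E)\), which is the standard identification from \cite{Tomforde2003Ultragraph,ImanfarPourabbasLarki2020UltragraphLPA}.

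For (RFUM2), since every generalized vertex is finite, there are no minimal sinks. The minimal infinite emitters are exactly the singletons \(\{v\}\) with \(v\) an infinite emitter. Indeed, if \(A\) is finite with \(|\varepsilon(A)|=\infty\), some \(v\in A\) emits infinitely many edges, and minimality forces \(A=\{v\}\). So each range \(\{r_E(e)\}\) is either a singleton infinite emitter, a singleton sink, or a singleton regular vertex, and (RFUM2) holds.

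Next I translate the conditions.
\begin{enumerate}[(a)]
\item (B1) is no infinite receivers, since \(v\in r(e)\) if and only if \(r_E(e)=v\).
\item (B2) matches no cycles with exits. An exit in the ultragraph sense is either an edge leaving the cycle or a sink in some \(r(e_i)=\{s(e_{i+1})\}\). The latter is impossible because cycle vertices emit edges, so this is the usual notion.
\item (B3) is literally the same condition.
\item For (B4), the set \(\mathcal A\) consists of the singletons of sinks and of infinite emitters. So reaching a terminal boundary set means reaching a sink or an infinite emitter, and reaching a cycle is unchanged.
\end{enumerate}
This is the content of the remark already made in Section~\ref{sec:RFD-conditions-ultragraphs}.

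Applying Theorem~\ref{thm:RFUM2-equivalence-algebraic-Cstar-RFD} to \(\mathcal G_E\) then gives the equivalence. I do not expect a genuine obstacle. The only point requiring care is the identification of minimal infinite emitters, together with checking that the isomorphisms of algebras respect the generators, so that RFD transfers across them.
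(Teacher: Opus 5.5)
Your proposal is correct and is essentially the paper's argument: the paper also regards \(E\) as an ultragraph with singleton ranges, notes that it satisfies Condition~\emph{(RFUM2)} and that (B1)--(B4) reduce to Bellier's conditions, and applies Theorem~\ref{thm:RFUM2-equivalence-algebraic-Cstar-RFD}; your checks (minimal infinite emitters are exactly the singletons of infinite emitters, there are no minimal sinks, and a sink cannot be an exit of a graph cycle) spell out what the paper states in one line. The only superfluous point is your caveat about generators, since RFD passes across any algebra isomorphism (respectively \(*\)-isomorphism) whether or not it preserves generators.
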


\section{RFD ultragraph algebras beyond graph algebras}\label{examplesec}

The purpose of this section is twofold.  First, we illustrate the
graph-theoretic RFD conditions by an example of an ultragraph which is genuinely
outside the graph setting.  Second, we show that this distinction is visible
not only at the level of the underlying combinatorial object, but also at the
level of the associated algebras.

Examples of ultragraph algebras which are not graph algebras are known in
both the \(C^*\)-algebraic and algebraic settings; see, for instance,
\cite{Tomforde2003Ultragraph} in the \(C^*\)-algebraic setting and
\cite{ImanfarPourabbasLarki2020UltragraphLPA,DuyenGoncalvesNam}
for ultragraph Leavitt path algebras.  The example below has the additional
feature that it belongs to the RFD class considered in this paper.  More
precisely, it satisfies Condition~\emph{(RFUM2)} and the graph-theoretic RFD
conditions, so both \(L_K(\mathcal G)\) and \(C^*(\mathcal G)\) are RFD.
Nevertheless, \(L_K(\mathcal G)\) is not isomorphic to the Leavitt path
algebra of any graph, and \(C^*(\mathcal G)\) is not isomorphic to the
\(C^*\)-algebra of any graph.

\begin{example} Let $G^0=\{v\}\cup\{w_n:n\in\mathbb N\} $ and let $ \mathcal G^1=\{e\}\cup\{e_n:n\in\mathbb N\}.$ Define $s(e)=v,$ $r(e)=A:=\{w_n:n\in\mathbb N\},$  and, for each \(n\in\mathbb N\), $ s(e_n)=w_n,$ $r(e_n)=\{w_n\}. $
\vspace{1.5cm}

\centerline{
\setlength{\unitlength}{1cm}
\begin{picture}(5,0)
\put(-0.5,0){\circle*{0.1}}
\put(-0.9,-0.1){$v$}
\qbezier(-0.5,0)(2.8,0)(3,1)
\qbezier(-0.5,0)(2.8,0)(3,0)
\qbezier(-0.5,0)(2.8,0)(3,-1)
\put(3,1){\circle*{0.1}}
\put(3,0){\circle*{0.1}}
\put(3,-1){\circle*{0.1}}
\put(3,-1.8){\vdots}
\qbezier(3,1)(5,1.5)(5,1)
\qbezier(3,1)(5,0.5)(5,1)
\put(5.1,1){$e_1$}
\put(4.2,1.15){$>$}
\qbezier(3,0)(5,0.5)(5,0)
\qbezier(3,0)(5,-0.5)(5,0)
\put(5.1,0){$e_2$}
\put(4.2,0.15){$>$}
\qbezier(3,-1)(5,-0.5)(5,-1)
\qbezier(3,-1)(5,-1.5)(5,-1)
\put(5.1,-1){$e_3$}
\put(4.2,-0.85){$>$}
\put(3,1.25){$w_1$}
\put(3,0.25){$w_2$}
\put(3,-0.8){$w_3$}
\put(1,0.2){$e$}
\put(5.1,1){$e_1$}
\put(0,-0.1){$>$}
\end{picture}}
\vspace{2 cm}

It is straightforward to check that \(\mathcal G\) satisfies the
graph-theoretic RFD conditions.  Hence, by
Theorems~\ref{thm:algebraic-RFD-ultragraph} and
\ref{thm:Cstar-RFD-ultragraph}, both the Leavitt path algebra
\(L_K(\mathcal G)\) and the ultragraph \(C^*\)-algebra \(C^*(\mathcal G)\) are
RFD.
Moreover, \(r(e)\) is a minimal infinite emitter, while each \(r(e_n)\) is a
singleton regular vertex.  Thus \(\mathcal G\) also satisfies
Condition~\emph{(RFUM2)}.

\end{example}

Before proving that the algebras associated with the ultragraph above are not
graph algebras, we record two auxiliary facts about central idempotents and
central projections in graph algebras with finitely many vertices.

\begin{lemma}\label{lem:finite-central-idempotents-finite-vertex-graph-LPA}
Let \(E\) be a graph with \(E^0\) finite, and let \(K\) be a field.  Then
\(L_K(E)\) has only finitely many central idempotents.
\end{lemma}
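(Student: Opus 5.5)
The plan is to show that every central idempotent of \(L_K(E)\) is determined by a graded ideal, and that there are only finitely many graded ideals when \(E^0\) is finite.

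Since \(E^0\) is finite, \(L_K(E)\) is unital with \(1=\sum_{v\in E^0}p_v\). Let \(z\) be a central idempotent. Then \(L_K(E)=zL_K(E)\oplus(1-z)L_K(E)\) is a decomposition into two-sided ideals, and \(z\) is the unit of the ideal \(I_z:=zL_K(E)\). In particular \(z\) is recovered from \(I_z\): if \(I_z=I_{z'}\) then \(z=zz'=z'\). So it suffices to show that \(z\mapsto I_z\) takes values in a finite set.

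The key step is to show that \(z\) is homogeneous of degree \(0\) for the canonical \(\mathbb Z\)-grading, where \(\deg s_e=1\), \(\deg s_e^*=-1\) and \(\deg p_v=0\). First, the center \(Z\) of \(L_K(E)\) is a graded subalgebra. To see this, write \(z=\sum_n z_n\) with \(z_n\) homogeneous of degree \(n\), and let \(g\) be a homogeneous generator of degree \(d\). Comparing the degree-\((n+d)\) components of \(gz=zg\) gives \(gz_n=z_ng\). Hence each \(z_n\) is central. Next, I will invoke the classical fact that in a commutative \(\mathbb Z\)-graded ring every idempotent lies in the degree-zero component. Its proof runs as follows. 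Let \(b>0\) be the top degree occurring in \(z\). Comparing top-degree components of \(z^2=z\) shows that \(z_b^2=0\). Then one uses commutativity to show that the positive-degree components generate a nil ideal in the relevant subring, which forces the positive part of \(z\) to vanish; the negative part is handled symmetrically. This step is the main obstacle. I would either cite it, for instance as a standard result on graded rings, or carry out the nilpotency induction explicitly on the finitely many nonzero components.

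Once \(z\in L_K(E)_0\), the ideal \(I_z=zL_K(E)\) is generated by a homogeneous element and is therefore a graded ideal. By the structure theorem for graded ideals of Leavitt path algebras (Tomforde; Abrams--Aranda--Perera--Siles), graded ideals are in bijection with admissible pairs \((H,S)\). Here \(H\subseteq E^0\) is hereditary and saturated, and \(S\) is a subset of the set of breaking vertices of \(H\), which is itself a subset of \(E^0\). Since \(E^0\) is finite, there are at most \(2^{|E^0|}\cdot 2^{|E^0|}\) such pairs. Hence there are finitely many graded ideals, and by the injectivity noted above, finitely many central idempotents.
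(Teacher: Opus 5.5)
Your proof is correct, and it takes a genuinely different route from the paper. The paper computes the center. It uses the Steinberg-algebra description of $Z(L_K(E))$ due to Clark, Mart\'in Barquero, Mart\'in Gonz\'alez and Siles Molina. Compactness of the unit space, which follows from finiteness of $E^0$, gives finitely many minimal compact open invariant sets and finitely many no-exit cycle classes. Hence $Z(L_K(E))$ is a finite direct sum of copies of $K$ and $K[x,x^{-1}]$, and these rings have only the idempotents $0$ and $1$.

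You never identify the center. You only use four facts:
\begin{enumerate}[(1)]
\item the center is a commutative $\mathbb Z$-graded subalgebra (your component-comparison argument for this is right);
\item idempotents of such a ring are homogeneous of degree $0$;
\item a central idempotent $z$ is recovered from the graded ideal $zL_K(E)$;
\item graded ideals are classified by admissible pairs $(H,S)$.
\end{enumerate}
You correctly include the breaking-vertex data $S$ in the last step, which is needed because $E$ may have infinite emitters.

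The paper's route gives more information: the exact shape of the center, and hence the exact number of central idempotents, namely $2^k$ with $k$ the number of summands. The price is a substantial external structure theorem plus some bookkeeping about which summands occur. Your route is more elementary and gives the explicit bound $4^{|E^0|}$. It also applies verbatim to any unital $\mathbb Z$-graded algebra with only finitely many graded ideals.

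The step you call the main obstacle is a true classical fact. However, your sketch (``the positive-degree components generate a nil ideal'') is not yet an argument. Here is a short complete proof.
\begin{enumerate}[(a)]
\item Let $R$ be a commutative $\mathbb Z$-graded ring and $e=\sum_n e_n$ an idempotent. The map $r\mapsto\sum_n r_nt^n$ is a ring homomorphism $R\to R[t,t^{-1}]$, so $f=\sum_n e_nt^n$ is idempotent.
\item For every prime $\mathfrak p$ of $R$, the image of $f$ in the domain $(R/\mathfrak p)[t,t^{-1}]$ is $0$ or $1$. So for $n\neq0$, $e_n$ lies in every prime, hence is nilpotent.
\item Therefore $f-e=\sum_{n\neq0}e_n(t^n-1)$ is nilpotent in $R[t,t^{-1}]$.
\item Two idempotents of a commutative ring whose difference $d$ is nilpotent must coincide, because $d^3=d$. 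So $f=e$.
\item Comparing coefficients of $t^n$ gives $e_n=0$ for all $n\neq0$.
\end{enumerate}
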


\begin{proof}
Since \(E^0\) is finite, the unit space \(G_E^{(0)}\) of the graph groupoid is
compact: indeed,
\[
   G_E^{(0)}=\bigcup_{v\in E^0} Z(v),
\]
and each \(Z(v)\) is compact open.  The whole unit space is, of course,
invariant.

By \cite[Lemma~3.6]{ClarkMartinBarquero},
every compact open invariant subset \(U\subseteq G_E^{(0)}\) is a finite
disjoint union of the minimal compact open invariant subsets contained in
\(U\).  Applying this to \(U=G_E^{(0)}\), we obtain that there are only
finitely many minimal compact open invariant subsets of \(G_E^{(0)}\).

The center of \(L_K(E)\) is described in
\cite[Theorem~3.8]{ClarkMartinBarquero}
and in the structural decomposition following that theorem (\cite[Theorem~3.9]{ClarkMartinBarquero}).  More precisely,
the center is a finite direct sum of components of two types: copies of \(K\),
coming from minimal compact open invariant subsets with no cycle
contribution, and copies of \(K[x,x^{-1}]\), coming from no-exit cycle
components.

Let us indicate why only finitely many of these summands occur in the present
case.  The summands of the first type are indexed by a subset of the finite set
of minimal compact open invariant subsets of \(G_E^{(0)}\).  The summands of
the second type are indexed by no-exit cycles \(c\) for which the associated
set \(U_{c^0,\emptyset}\) is compact.  Since \(E^0\) is finite, there are only
finitely many such cycle classes: two no-exit cycles which meet at a vertex are
cyclic permutations of one another, and there are only finitely many vertices.

Thus \(Z(L_K(E))\) is a finite direct sum of copies of \(K\) and
\(K[x,x^{-1}]\).  Since both \(K\) and \(K[x,x^{-1}]\) have only the
idempotents \(0\) and \(1\), the center \(Z(L_K(E))\) has only finitely many
idempotents.  Equivalently, \(L_K(E)\) has only finitely many central
idempotents.
\end{proof}

\begin{remark}
    The above lemma may also be proved using the results in \cite{AlahmadiAlsulami}.

\end{remark}

\begin{lemma}\label{lem:finite-central-projections-finite-vertex-graph-Cstar}
Let \(E\) be a countable graph with \(E^0\) finite.  Then \(C^*(E)\) has
only finitely many central projections.
\end{lemma}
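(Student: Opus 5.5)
The plan is to show that every central projection of \(C^*(E)\) generates a gauge-invariant ideal, and then to count the gauge-invariant ideals using the Bates--Hong--Raeburn--Szyma\'nski classification. This classification is finite because \(E^0\) is finite. Since \(E^0\) is finite, \(C^*(E)\) is unital with unit \(\sum_{v\in E^0}p_v\).

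\textbf{Step 1: injectivity.} Let \(z\in C^*(E)\) be a central projection. Then \(I_z:=zC^*(E)\) is a closed two-sided ideal, and \(z\) is its unit. Hence \(z\) is recovered from \(I_z\), so the assignment \(z\mapsto I_z\) is injective. It therefore suffices to show that only finitely many ideals arise as \(I_z\).

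\textbf{Step 2: gauge invariance.} Let \(\gamma:\mathbb T\to\operatorname{Aut}(C^*(E))\) be the gauge action. Each \(\gamma_t\) is an automorphism, so \(\gamma_t(z)\) is again a central projection. In particular \(\gamma_t(z)\) commutes with \(z\). For two commuting projections \(p,q\), the element \(p-q\) is self-adjoint with spectrum contained in \(\{-1,0,1\}\). Consequently \(\|p-q\|<1\) forces \(p=q\). The map \(t\mapsto\gamma_t(z)\) is norm-continuous and equals \(z\) at \(t=1\). The set \(\{t:\gamma_t(z)=z\}\) is closed by continuity. It is also open, because near any such \(t\) the distance \(\|\gamma_s(z)-z\|\) is less than \(1\). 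Since \(\mathbb T\) is connected, \(\gamma_t(z)=z\) for all \(t\). Hence \(I_z\) is a gauge-invariant ideal. This is the step I expect to be the crux; everything else is bookkeeping.

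\textbf{Step 3: counting.} By the Bates--Hong--Raeburn--Szyma\'nski classification of gauge-invariant ideals of graph \(C^*\)-algebras, such ideals are in bijection with admissible pairs \((H,S)\). Here \(H\subseteq E^0\) is hereditary and saturated, and \(S\) is a set of breaking vertices of \(H\), so \(S\subseteq E^0\) as well. As \(E^0\) is finite, there are at most \(2^{|E^0|}\cdot 2^{|E^0|}\) such pairs. Hence \(C^*(E)\) has only finitely many gauge-invariant ideals. By Steps 1 and 2, it has only finitely many central projections.

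The countability hypothesis on \(E\) is used only so that the cited classification applies as stated. No assumption on \(E^1\) beyond countability is needed.
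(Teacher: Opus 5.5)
Your proof is correct, and it takes a genuinely different route from the paper's.

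The paper works at the level of the primitive ideal space. It uses Eilers's identification of central projections of the unital algebra \(C^*(E)\) with clopen subsets of \(\operatorname{Prim}(C^*(E))\). It then uses the Hong--Szyma\'nski parametrization of primitive ideals by \(M_\gamma(E)\), \(BV(E)\) and \(M_\tau(E)\times\mathbb T\). Since \(E^0\) is finite, there are only finitely many isolated parameters and finitely many circle fibres, and each fibre is connected. So \(\operatorname{Prim}(C^*(E))\) has finitely many connected components, hence finitely many clopen subsets.

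You stay inside the algebra. Distinct commuting projections are at norm distance \(1\), and the gauge action is point-norm continuous on the connected group \(\mathbb T\); together these force every central projection to be gauge-fixed. The map \(z\mapsto zC^*(E)\) then embeds the central projections into the gauge-invariant ideals. The Bates--Hong--Raeburn--Szyma\'nski classification counts these by admissible pairs \((H,S)\).

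What your route buys:
\begin{enumerate}
\item It needs only the well-established lattice of gauge-invariant ideals, not the finer topology of \(\operatorname{Prim}(C^*(E))\). That topology is the more delicate part of the literature, as the correction by Gabe cited in the paper indicates.
\item It gives the explicit bound \(4^{|E^0|}\).
\item Your Step~2 is really the general fact that a continuous action of a connected group fixes every central projection.
\end{enumerate}

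The paper's route, in exchange, gives structural information about \(\operatorname{Prim}(C^*(E))\), namely that it has finitely many connected components. Both arguments ultimately rest on the connectedness of \(\mathbb T\): the paper uses it through the circle fibres of the primitive ideal space, and you use it through the gauge action itself.
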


\begin{proof}
Since \(E^0\) is finite, \(C^*(E)\) is unital.   By
\cite[Proposition~3.4]{Eilers1999ConnectivityComponents}, central
projections of \(C^*(E)\) correspond to clopen subsets of
\(\operatorname{Prim}(C^*(E))\).  It is therefore enough to show that
\(\operatorname{Prim}(C^*(E))\) has only finitely many clopen subsets.

By \cite[Corollary~2.11]{HongSzymanski2004Primitive}, the primitive ideals
of \(C^*(E)\) are described by three types of parameters:
\[
   M_\gamma(E),\qquad BV(E),\qquad M_\tau(E)\times\mathbb T.
\]
Since \(E^0\) is finite, there are only finitely many subsets of \(E^0\).
Hence \(M_\gamma(E)\) and \(M_\tau(E)\) are finite.  Also
\(BV(E)\subseteq E^0\), so \(BV(E)\) is finite.

Thus \(\operatorname{Prim}(C^*(E))\) is covered by finitely many pieces:
the singleton pieces corresponding to \(M_\gamma(E)\) and \(BV(E)\), and,
for each \(N\in M_\tau(E)\), the circle fiber $
   X_N:=\{R_{N,t}:t\in\mathbb T\}.$
Although each \(X_N\) contains infinitely many primitive ideals, there are
only finitely many such fibers.

As observed immediately before \cite[Lemma~2.8]{HongSzymanski2004Primitive}, for each \(N\in M_\tau(E)\), the family
   $X_N:=\{R_{N,t}:t\in\mathbb T\}$
embeds topologically as a circle in \(\operatorname{Prim}(C^*(E))\).  Equivalently,
\(X_N\) is homeomorphic to \(\mathbb T\).  Hence each \(X_N\) is connected.

It follows that \(\operatorname{Prim}(C^*(E))\) is covered by finitely many
connected subspaces.  Each connected subspace is contained in a connected
component, so \(\operatorname{Prim}(C^*(E))\) has only finitely many
connected components.  Since every clopen subset of a topological space is a
union of connected components, \(\operatorname{Prim}(C^*(E))\) has only
finitely many clopen subsets.

Therefore \(C^*(E)\) has only finitely many central projections.
\end{proof}

\begin{remark}
For the topology on the primitive ideal space of a graph \(C^*\)-algebra, see
\cite{HongSzymanski2004Primitive}; see also
\cite{Gabe2013GraphT1Primitive} for a correction to
\cite[Theorem~3.4]{HongSzymanski2004Primitive}.
\end{remark}

With the two lemmas above in hand, we can distinguish the algebras associated with
\(\mathcal G\) from graph algebras by using the central idempotents and central
projections coming from the vertices \(w_n\).

\begin{proposition}
For the ultragraph \(\mathcal G\) in the preceding example, the Leavitt path
algebra \(L_K(\mathcal G)\) is not isomorphic to the Leavitt path algebra of
any countable graph.  Likewise, the ultragraph \(C^*\)-algebra
\(C^*(\mathcal G)\) is not isomorphic to the graph \(C^*\)-algebra of any
countable graph.
\end{proposition}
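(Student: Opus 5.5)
The strategy is to show that both algebras have infinitely many nonzero, pairwise orthogonal central idempotents (respectively central projections), and then to compare with graph algebras. First, for each \(n\), we consider \(p_{w_n}\) in \(L_K(\mathcal G)\) or \(C^*(\mathcal G)\). Since \(w_n\) emits only \(e_n\) and \(r(e_n)=\{w_n\}\), the relations give \(p_{w_n}=s_{e_n}s_{e_n}^*\), \(s_{e_n}^*s_{e_n}=p_{w_n}\), and \(s_{e_n}p_{w_n}=p_{w_n}s_{e_n}=s_{e_n}\). For the other generators the products vanish: \(p_{w_n}s_e=0\) because \(s(e)=v\neq w_n\), and \(p_{w_n}p_{w_m}=0\) for \(m\neq n\). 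The remaining relation is \(s_ep_{w_n}=s_ep_Ap_{w_n}=s_ep_{w_n}\), and this is nonzero, so \(p_{w_n}\) is \emph{not} central. I therefore pass to a different idempotent, \(q_n:=p_{w_n}+s_ep_{w_n}s_e^*\), which is the projection onto the hereditary saturated piece generated by \(w_n\) together with its image under \(e\). One then checks that \(q_n\) commutes with every generator. For \(s_e\) we use \(q_ns_e=s_ep_{w_n}=s_eq_n\), which holds because \(s_e^*s_e=p_A\) and \(p_A p_{w_n}=p_{w_n}\). For \(s_{e_m}\) and \(p_B\) the commutation follows by Boolean calculations, using \(p_vs_e=s_e\). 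The \(q_n\) are pairwise orthogonal and nonzero, since \(p_{w_n}\neq0\). In the \(C^*\) case they are self-adjoint projections. This gives infinitely many nonzero orthogonal central idempotents or projections.

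Next, suppose \(L_K(\mathcal G)\cong L_K(E)\) (respectively \(C^*(\mathcal G)\cong C^*(E)\)) for a countable graph \(E\). Since \(\mathcal G^0\) contains \(G^0\in\) the finite union closure \(\{v\}\cup A\), the algebra is unital with unit \(p_v+p_A\). A Leavitt path algebra or graph \(C^*\)-algebra is unital iff \(E^0\) is finite, so \(E^0\) is finite. Lemma~\ref{lem:finite-central-idempotents-finite-vertex-graph-LPA}, respectively Lemma~\ref{lem:finite-central-projections-finite-vertex-graph-Cstar}, then says there are only finitely many central idempotents or projections. But finite sums of the \(q_n\) give infinitely many distinct ones, which is a contradiction.

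I expect the main obstacle to be the verification that \(q_n\) is central. In particular, the product \(s_e^*q_n\) must be handled carefully: \(s_e^*p_{w_n}=0\), and \(s_e^*s_ep_{w_n}s_e^*=p_{w_n}s_e^*\). So both \(s_e^*q_n\) and \(q_ns_e^*\) equal \(p_{w_n}s_e^*\). The products with \(s_{e_n}\) are also delicate, since \(s_{e_n}s_ep_{w_n}s_e^*=0\) (as \(s(e)=v\notin r(e_n)\)) and \(s_ep_{w_n}s_e^*s_{e_n}=0\) (as \(s_e^*s_{e_n}=0\)). A secondary point is the unitality criterion for graph algebras, which is standard and which I will invoke as known.
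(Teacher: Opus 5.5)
Your proposal is correct and is essentially the paper's proof. It uses the same central idempotents $q_n=p_{w_n}+s_ep_{w_n}s_e^*$, the same unit $p_v+p_A$ forcing $E^0$ to be finite, and the same two finiteness lemmas for central idempotents and central projections in graph algebras with finitely many vertices.

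Your generator-by-generator check of centrality fills in the ``direct check'' the paper omits, with two small points. The identity $s_eq_n=s_ep_{w_n}$ also needs $s_es_e=0$, which holds since $v\notin r(e)$. Passing to finite sums is unnecessary, because the pairwise orthogonal nonzero $q_n$ are already infinitely many distinct central idempotents.
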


\begin{proof}
We use the same obstruction in both settings.  The algebras \(L_K(\mathcal G)\)
and \(C^*(\mathcal G)\) are unital, with unit \(p_v+p_A\).  For each
\(n\in\mathbb N\), set
\[
   q_n:=p_{w_n}+s_e p_{w_n}s_e^*.
\]
A direct check on the generators shows that each \(q_n\) is central.  In
\(L_K(\mathcal G)\), each \(q_n\) is a nonzero central idempotent, while in
\(C^*(\mathcal G)\), each \(q_n\) is a nonzero central projection.  Moreover,
the family \(\{q_n:n\in\mathbb N\}\) is pairwise orthogonal.  Hence
\(L_K(\mathcal G)\) contains infinitely many nonzero pairwise orthogonal
central idempotents, and \(C^*(\mathcal G)\) contains infinitely many nonzero
pairwise orthogonal central projections.

Suppose first that \(L_K(\mathcal G)\cong L_K(E)\) for some countable graph
\(E\).  Since \(L_K(\mathcal G)\) is unital, \(L_K(E)\) is unital, and hence
\(E^0\) is finite.  By
Lemma~\ref{lem:finite-central-idempotents-finite-vertex-graph-LPA},
\(L_K(E)\) has only finitely many central idempotents, and therefore cannot
contain infinitely many nonzero pairwise orthogonal central idempotents.  This
contradiction shows that \(L_K(\mathcal G)\) is not isomorphic to the Leavitt
path algebra of any countable graph.

Similarly, suppose that \(C^*(\mathcal G)\cong C^*(E)\) for some countable
graph \(E\).  Since \(C^*(\mathcal G)\) is unital, \(C^*(E)\) is unital, and
hence \(E^0\) is finite.  By
Lemma~\ref{lem:finite-central-projections-finite-vertex-graph-Cstar},
\(C^*(E)\) has only finitely many central projections, and therefore cannot
contain infinitely many nonzero pairwise orthogonal central projections.  This
contradiction shows that \(C^*(\mathcal G)\) is not isomorphic to the graph
\(C^*\)-algebra of any countable graph.
\end{proof}

\bibliographystyle{abbrv}
\bibliography{rfd_branching_refs_revised}

@incollection{Gabe2013GraphT1Primitive,
  author    = {Gabe, James},
  title     = {Graph {$C^*$}-algebras with a {$T_1$} primitive ideal space},
  booktitle = {Operator Algebra and Dynamics},
  series    = {Springer Proceedings in Mathematics \& Statistics},
  volume    = {58},
  pages     = {141--156},
  publisher = {Springer},
  address   = {Heidelberg},
  year      = {2013},
  doi       = {10.1007/978-3-642-39459-1_7}
}

@article{ClarkMartinBarquero,
  author  = {Clark, Lisa Orloff and Mart{\'i}n Barquero, Dolores and
             Mart{\'i}n Gonz{\'a}lez, C{\'a}ndido and
             Siles Molina, Mercedes},
  title   = {Using the {Steinberg} algebra model to determine the center of any
             {Leavitt} path algebra},
  journal = {Israel Journal of Mathematics},
  volume  = {230},
  number  = {1},
  pages   = {23--44},
  year    = {2019},
  doi     = {10.1007/s11856-018-1816-8}
}

@article{HongSzymanski2004Primitive,
  author  = {Hong, Jeong Hee and Szyma{\'n}ski, Wojciech},
  title   = {The primitive ideal space of the {$C^*$}-algebras of infinite graphs},
  journal = {Journal of the Mathematical Society of Japan},
  volume  = {56},
  number  = {1},
  pages   = {45--64},
  year    = {2004},
  doi     = {10.2969/jmsj/1191418695}
}

@article{Eilers1999ConnectivityComponents,
  author  = {Eilers, S{\o}ren},
  title   = {Connectivity and components for {$C^*$}-algebras},
  journal = {Mathematica Scandinavica},
  volume  = {84},
  number  = {1},
  pages   = {119--136},
  year    = {1999}
}

@misc{AlahmadiAlsulami,
  author        = {Alahmadi, Adel and Alsulami, Hamed},
  title         = {Centers of {Leavitt} path algebras and their completions},
  year          = {2015},
  note          = {arXiv:1507.07439 [math.RA]},
  eprint        = {1507.07439},
  archivePrefix = {arXiv},
  primaryClass  = {math.RA},
  doi           = {10.48550/arXiv.1507.07439}
}

@article{TascaGoncalves2022KMSUltragraphSinks,
  author  = {Tasca, Felipe Augusto and Gon{\c c}alves, Daniel},
  title   = {{KMS} states and continuous orbit equivalence for ultragraph shift spaces with sinks},
  journal = {Publicacions Matem{\`a}tiques},
  volume  = {66},
  number  = {2},
  pages   = {729--787},
  year    = {2022},
  doi     = {10.5565/PUBLMAT6622208},
  eprint  = {2003.05793},
  archivePrefix = {arXiv},
  primaryClass  = {math.OA}
}

@article{BratteliJorgensen1997,
  author  = {Bratteli, Ola and Jorgensen, Palle E. T.},
  title   = {Isometries, shifts, {Cuntz} algebras and multiresolution wavelet analysis of scale {$N$}},
  journal = {Integral Equations and Operator Theory},
  volume  = {28},
  number  = {4},
  pages   = {382--443},
  year    = {1997},
  doi     = {10.1007/BF01200379}
}

@article{Ozawa2004QWEP,
  author  = {Ozawa, Narutaka},
  title   = {About the {QWEP} conjecture},
  journal = {International Journal of Mathematics},
  volume  = {15},
  number  = {5},
  pages   = {501--530},
  year    = {2004},
  doi     = {10.1142/S0129167X04002417}
}

@article{Dadarlat2000NonnuclearAF,
  author  = {Dadarlat, Marius},
  title   = {Nonnuclear subalgebras of {AF} algebras},
  journal = {American Journal of Mathematics},
  volume  = {122},
  number  = {3},
  pages   = {581--597},
  year    = {2000},
  doi     = {10.1353/ajm.2000.0018}
}

@article{GoncalvesLiRoyer2018HigherRankGraph,
  author  = {Gon{\c c}alves, Daniel and Li, Hui and Royer, Danilo},
  title   = {Branching systems for higher-rank graph {$C^*$}-algebras},
  journal = {Glasgow Mathematical Journal},
  volume  = {60},
  number  = {3},
  pages   = {731--751},
  year    = {2018},
  doi     = {10.1017/S0017089517000072}
}

@misc{BellierShulman2026,
  author        = {Bellier, Guillaume and Shulman, Tatiana},
  title         = {Decomposition theorems for unital graph {$C^*$}-algebras},
  year          = {2026},
  note          = {arXiv:2505.12769 [math.OA]},
  eprint        = {2505.12769},
  archivePrefix = {arXiv},
  primaryClass  = {math.OA},
  doi           = {10.48550/arXiv.2505.12769}
}

@misc{Bellier2026,
  author        = {Bellier, Guillaume},
  title         = {The {RFD} property for graph {$C^*$}-algebras},
  year          = {2026},
  note          = {arXiv:2604.06993 [math.OA]},
  eprint        = {2604.06993},
  archivePrefix = {arXiv},
  primaryClass  = {math.OA},
  doi           = {10.48550/arXiv.2604.06993}
}

@article{Archbold1995,
  author  = {Archbold, Robert J.},
  title   = {On residually finite-dimensional {$C^*$}-algebras},
  journal = {Proceedings of the American Mathematical Society},
  volume  = {123},
  number  = {9},
  pages   = {2935--2937},
  year    = {1995},
  doi     = {10.1090/S0002-9939-1995-1257119-5}
}

@article{ExelLoring1992,
  author  = {Exel, Ruy and Loring, Terry A.},
  title   = {Finite-dimensional representations of free product {$C^*$}-algebras},
  journal = {International Journal of Mathematics},
  volume  = {3},
  number  = {4},
  pages   = {469--476},
  year    = {1992},
  doi     = {10.1142/S0129167X92000217}
}

@article{Farkas2005,
  author  = {Farkas, Daniel R.},
  title   = {Generalized reductive algebras and a quantum example},
  journal = {Pacific Journal of Mathematics},
  volume  = {221},
  number  = {1},
  pages   = {35--57},
  year    = {2005},
  doi     = {10.2140/pjm.2005.221.35}
}

@article{GoncalvesLiRoyer2016,
  author  = {Gon{\c c}alves, Daniel and Li, Hui and Royer, Danilo},
  title   = {Faithful representations of graph algebras via branching systems},
  journal = {Canadian Mathematical Bulletin},
  volume  = {59},
  number  = {1},
  pages   = {95--103},
  year    = {2016},
  doi     = {10.4153/CMB-2015-023-6}
}

@article{Reyes2024FiniteDual,
  author  = {Reyes, Manuel L.},
  title   = {The finite dual coalgebra as a quantization of the maximal spectrum},
  journal = {Journal of Algebra},
  volume  = {644},
  pages   = {287--328},
  year    = {2024},
  doi     = {10.1016/j.jalgebra.2023.12.035}
}

@article{Rowen2019HopfianBassian,
  author        = {Rowen, Louis H.},
  title         = {Hopfian and {Bassian} algebras},
  journal       = {Communications in Algebra},
  volume        = {47},
  number        = {9},
  pages         = {3757--3766},
  year          = {2019},
  doi           = {10.1080/00927872.2019.1576182},
  eprint        = {1711.06483},
  archivePrefix = {arXiv},
  primaryClass  = {math.RA}
}

@article{LarsenShalev2022RFD,
  author  = {Larsen, Michael and Shalev, Aner},
  title   = {Residually finite dimensional algebras and polynomial almost identities},
  journal = {Journal of Algebra and Its Applications},
  volume  = {21},
  number  = {2},
  pages   = {2250038},
  year    = {2022},
  doi     = {10.1142/S0219498822500384}
}

@article{ShulmanSkalski,
  author  = {Shulman, Tatiana and Skalski, Adam},
  title   = {{RFD} property for groupoid {$C^*$}-algebras of amenable groupoids and for crossed products by amenable actions},
  journal = {Journal of Functional Analysis},
  volume  = {291},
  number  = {3},
  pages   = {111510},
  year    = {2026},
  doi     = {10.1016/j.jfa.2026.111510},
  eprint  = {2305.12122},
  archivePrefix = {arXiv},
  primaryClass = {math.OA}
}

@article{GoncalvesRoyer2019InfiniteAlphabetEdgeShift,
  author  = {Gon{\c c}alves, Daniel and Royer, Danilo},
  title   = {Infinite alphabet edge shift spaces via ultragraphs and their {$C^*$}-algebras},
  journal = {International Mathematics Research Notices},
  volume  = {2019},
  number  = {7},
  pages   = {2177--2203},
  year    = {2019},
  doi     = {10.1093/imrn/rnx187},
  eprint  = {1703.05069},
  archivePrefix = {arXiv},
  primaryClass  = {math.OA}
}

@article{EidtGoncalvesRoyerRelativeUltragraph,
  author        = {Eidt, Ben-Hur and Gon{\c c}alves, Daniel and Royer, Danilo},
  title         = {Relative ultragraph algebras and infinite interval maps},
  journal       = {Indiana University Mathematics Journal},
  note          = {To appear. arXiv:2508.19835 [math.OA]},
  year    = {2026},
  eprint        = {2508.19835},
  archivePrefix = {arXiv},
  primaryClass  = {math.OA},
  doi           = {10.48550/arXiv.2508.19835}
}

@article{UltragraphGroupoids,
  author  = {de Castro, Gilles G. and Gon{\c c}alves, Daniel and van Wyk, Daniel W.},
  title   = {Ultragraph algebras via labelled graph groupoids, with applications to generalized uniqueness theorems},
  journal = {Journal of Algebra},
  volume  = {579},
  pages   = {456--495},
  year    = {2021},
  doi     = {10.1016/j.jalgebra.2021.04.002}
}

@article{ImanfarPourabbasLarki2020UltragraphLPA,
  author  = {Imanfar, M. and Pourabbas, A. and Larki, H.},
  title   = {The {Leavitt} path algebras of ultragraphs},
  journal = {Kyungpook Mathematical Journal},
  volume  = {60},
  number  = {1},
  pages   = {21--43},
  year    = {2020}
}

@article{GoncalvesRoyer2011,
  author  = {Gon{\c c}alves, Daniel and Royer, Danilo},
  title   = {On the representations of {Leavitt} path algebras},
  journal = {Journal of Algebra},
  volume  = {333},
  number  = {1},
  pages   = {258--272},
  year    = {2011},
  doi     = {10.1016/j.jalgebra.2011.02.009}
}

@article{GoncalvesRoyer2012,
  author  = {Gon{\c c}alves, Daniel and Royer, Danilo},
  title   = {Graph {$C^*$}-algebras, branching systems and the {Perron--Frobenius} operator},
  journal = {Journal of Mathematical Analysis and Applications},
  volume  = {391},
  number  = {2},
  pages   = {457--465},
  year    = {2012},
  doi     = {10.1016/j.jmaa.2012.02.048}
}

@article{KraftSmallWallach2001,
  author  = {Kraft, Hanspeter and Small, Lance W. and Wallach, Nolan R.},
  title   = {Properties and examples of {FCR}-algebras},
  journal = {Manuscripta Mathematica},
  volume  = {104},
  pages   = {443--450},
  year    = {2001},
  doi     = {10.1007/s002290170018}
}

@article{Szymanski2002,
  author  = {Szyma\'{n}ski, Wojciech},
  title   = {General {Cuntz--Krieger} uniqueness theorem},
  journal = {International Journal of Mathematics},
  volume  = {13},
  number  = {5},
  pages   = {549--555},
  year    = {2002},
  doi     = {10.1142/S0129167X0200137X}
}

@article{Tomforde2003Ultragraph,
  author  = {Tomforde, Mark},
  title   = {A unified approach to {Exel--Laca} algebras and {$C^*$}-algebras associated to graphs},
  journal = {Journal of Operator Theory},
  volume  = {50},
  number  = {2},
  pages   = {345--368},
  year    = {2003}
}

@article{DuyenGoncalvesNam,
  author  = {Duyen, T. T. H. and Gon{\c c}alves, Daniel and Nam, T. G.},
  title   = {On the ideals of ultragraph {Leavitt} path algebras},
  journal = {Algebras and Representation Theory},
  volume  = {27},
  number  = {1},
  pages   = {77--113},
  year    = {2024},
  doi     = {10.1007/s10468-023-10206-0}
}

@article{Tomforde2003SimplicityUltragraph,
  author  = {Tomforde, Mark},
  title   = {Simplicity of ultragraph algebras},
  journal = {Indiana University Mathematics Journal},
  volume  = {52},
  number  = {4},
  pages   = {901--925},
  year    = {2003}
}

@article{GoncalvesLiRoyer2016Ultragraph,
  author  = {Gon{\c c}alves, Daniel and Li, Hui and Royer, Danilo},
  title   = {Branching systems and general {Cuntz--Krieger} uniqueness theorem for ultragraph {$C^*$}-algebras},
  journal = {International Journal of Mathematics},
  volume  = {27},
  number  = {10},
  pages   = {1650083},
  year    = {2016},
  doi     = {10.1142/S0129167X1650083X}
}

@article{GoncalvesRoyer2021UltragraphReduction,
  author  = {Gon{\c c}alves, Daniel and Royer, Danilo},
  title   = {Representations and the reduction theorem for ultragraph {Leavitt} path algebras},
  journal = {Journal of Algebraic Combinatorics},
  volume  = {53},
  pages   = {505--526},
  year    = {2021},
  doi     = {10.1007/s10801-020-01004-8}
}

\end{document}